\documentclass[preprint,11pt]{elsarticle}

\usepackage{lineno,hyperref}

\journal{Discrete Applied Mathematics}

\usepackage{lipsum}
\usepackage{amssymb,amsmath}

\setcounter{tocdepth}{3}

\usepackage{graphicx}
\usepackage[table]{xcolor}%
\usepackage{diagbox}
\usepackage{multicol}
\usepackage{tikz}
\usetikzlibrary{shadows}
\usepackage{pifont}
\sloppy

\usepackage{siunitx}
\usepackage{booktabs}
\usepackage{tabularx}
\usepackage[ruled,vlined,linesnumbered,noend]{algorithm2e}
\usepackage{bm}
\usepackage{multirow}
\usepackage{complexity}
\usepackage{framed}
\usepackage[caption=false]{subfig}
\usepackage{float}
\usepackage{hyperref}
\usepackage{wasysym}
\usepackage[colorinlistoftodos,prependcaption,textsize=tiny]{todonotes}
\usepackage{epstopdf}
\usepackage{algpseudocode}
\usepackage{mathtools}

\ifpdf
  \DeclareGraphicsExtensions{.eps,.pdf,.png,.jpg}
\else
  \DeclareGraphicsExtensions{.eps}
\fi

\definecolor{azul1}{RGB}{183,200,196}



\newtheorem{observation}{Observation}[section]
\newtheorem{theorem}{Theorem}[section]

\newtheorem{lemma}{Lemma}[section]
\newtheorem{corollary}{Corollary}[section]
\newtheorem{conjecture}{Conjecture}[section]

\newenvironment{proof}{\paragraph{Proof}}{\hfill$\square$}

\DeclarePairedDelimiter{\floor}{\lfloor}{\rfloor}

\begin{document}

\begin{frontmatter}

\tnotetext[mytitlenote]{This study was financed by CAPES - Finance Code 001, FAPERJ and CNPq.}

\title{New Results on Edge-coloring and Total-coloring of Split Graphs}

\author[af1]{Fernanda Couto}
\ead{fernandavdc@ufrrj.br}

\author[af2,cor1]{Diego Amaro Ferraz}
\ead{ferrazda@cos.ufrj.br}

\author[af2]{Sulamita Klein}
\ead{sula@cos.ufrj.br}

\address[af1]{Universidade Federal Rural do Rio de Janeiro, 
Nova Igua\c{c}u, Brazil}

\address[af2]{Universidade Federal do Rio de Janeiro, Rio de Janeiro, Brazil}

\cortext[cor1]{Corresponding author}
\begin{abstract} 

A split graph is a graph whose vertex set can be partitioned into a clique and an independent set. A connected graph $G$ is said to be $t$-admissible if admits a special spanning tree in which the distance between any two adjacent vertices is at most $t$. Given a graph $G$, determining  the smallest $t$ for which $G$ is $t$-admissible, i.e., the stretch index of $G$ denoted by $\sigma(G)$, is the goal of the {\sc $t$-admissibility problem}. Split graphs are $3$-admissible and can be partitioned into three subclasses: split graphs with $\sigma=1, 2 $ or $3$. In this work we consider such a partition while dealing with the problem of coloring a split graph. 
Vizing proved that any graph can have its edges colored with $\Delta$ or $\Delta+1$ colors, and thus can be classified as \emph{Class 1} or \emph{Class 2}, respectively. When both, edges and vertices, are simultaneously colored, it is conjectured that any graph can be colored with $\Delta+1$ or $\Delta+2$ colors, and thus can be classified as \emph{Type 1} or \emph{Type 2}. Both variants are still open for split graphs. In this paper, using the partition of split graphs presented above, we consider the {\sc edge coloring problem} and the {\sc total coloring problem} for split graphs with $\sigma=2$. For this class, we characterize Class~2 and Type~2 graphs and we provide polynomial-time algorithms to color any Class~1 or Type~1 graph.

\end{abstract}
\begin{keyword}
split graphs, $t$-admissible graphs, edge coloring, total coloring
\end{keyword}
\end{frontmatter}

\section{Introduction}\label{sec:intro}

The graphs considered in this paper are simple (i.e., without loops and multiple edges), connected, non-directed and unweighted. Basic definitions can be found in~\cite{Bondy}. More specific definitions are given throughout the paper.

A proper $k$-coloring of a graph is an assignment of $k$ colors to its elements (vertices or edges) in which adjacent or incident elements receive distinct colors. In this case, $G$ is said to be $k$-colorable.
In coloring problems, the goal is usually to minimize the number of assigned colors, that is, to determine the smallest $k$ for which the graph is $k$-colorable. In the \textsc{vertex coloring problem}~\cite{brooks_1941}, this number is called the \emph{chromatic number} and is denoted by $\chi(G)$; in the \textsc{edge coloring problem}~\cite{Vizing}, this number is called the \emph{chromatic index} and is denoted by $\chi'(G)$. It is easy to see that the maximum degree $\Delta$ of a graph $G$ is a lower bound for its chromatic index. Furthermore, Vizing~\cite{Vizing} proved that $\Delta+1$ is an upper bound for the chromatic index of any graph $G$, and thus, graphs can be classified either as \emph{Class 1}, if they are $\Delta$-colorable, or  as \emph{Class 2}, otherwise. After Vizing's result, this problem was also known as the {\sc classification problem}.

When the proposal is to properly color vertices and edges of a graph, simultaneously, we are dealing with the {\sc total coloring problem}~\cite{TotalVizing}. In this case, the smallest number of colors needed in a total coloring of a graph $G$ is called the \emph{total chromatic number} and is denoted by $\chi''(G)$. Again it is easy to see that the value of $\Delta+1$ is a lower bound for the total chromatic number of any graph $G$. The \textit{Total Coloring Conjecture}~(\cite{TCCBehzad}, \cite{TotalVizing}) states that, for any graph $G$, $\chi''(G)=\Delta+1$ or $\chi''(G)=\Delta+2$. Graphs that fit the first case are called Type 1 graphs and graphs that fit the second case are called Type 2 graphs. It is known that some classes satisfy the TCC such as split graphs~\cite{Chen} and indifference graphs~\cite{IndiferencaTCC}. A graph $G$ is called a split graph if $V(G)$ can be partitioned into a clique $X$ and a independent set $Y$. Throughout this text we denote a split graph as $G=((X,Y),E)$ and we consider that $X$ is a maximal clique. Although, \textsc{edge coloring} and \textsc{total coloring problems} are known to be \NP-hard in general (\cite{Holyer1981TheNO}, \cite{SANCHEZARROYO1989315}), both problems remain open when restricted to split graphs. There are several results in the literature that influence (directly or indirectly) the edge (resp. total) coloring of split graphs (\cite{Wilson}, \cite{Plantholt}, \cite{Chen}, \cite{Sheila})  (resp. \cite{Chen}, \cite{Hilton}). Furthermore, until this work, split graphs have been studied in the context of edge and total coloring by considering some subclasses such as split-indifference graphs (for which both versions are solved~\cite{CAMPOS20122690}, \cite{CARMENORTIZ1998209}), split-comparability and split interval graphs for which the edge coloring problem is solved (\cite{ComparabilityOrtiz}, \cite{Gonzaga}). Since the goal is to fully classify split graphs with respect to both variants of the coloring problem, an interesting question that arises is: Which subclasses are left to be studied in order to accomplish this task for split graphs?

The {\sc t-admissibility problem}, which is another quite challenging problem in general, is known to be polynomially solvable for split graphs. Proposed by Cai and Corneil in 1995~\cite{Cail}, the \textsc{$t$-admissibility problem}, also known as the \textsc{minimum stretch spanning tree problem} (\textsc{MSST}), aims to determine the smallest $t$ such that a given graph $G$ admits a tree $t$-spanner, that is, a spanning tree $T$ in which the greatest distance between any pair of adjacent vertices of $G$ is at most $t$. If $G$ admits a tree $t$-spanner, then $G$ is said to be \emph{$t$-admissible} and $t$ is the \emph{stretch factor} associated to the tree. The smallest stretch factor among all spanning trees of $G$ is the \emph{stretch index} of $G$, denoted by $\sigma(G)$, or simply $\sigma$. We call a graph $G$ with $\sigma(G)=t$ a $(\sigma=t)$-graph. Some previous studies motivated the formulation of the \textsc{$t$-admissibility problem}, such as Peleg and Ullman's work~\cite{Peleg} in $1987$, and Chew in $1986$~\cite{Chew}. 
Deciding whether a graph $G$ is a $(\sigma=1)$-graph or a $(\sigma=2)$-graph are polynomial-time solvable problems. Indeed, it is easy to see that a graph is a $(\sigma=1)$-graph if, and only if, it is a tree. Moreover, Cai and Corneil provided a linear-time algorithm to decide if a graph has $\sigma(G)=2$. They also settled that, for $t$ at least $4$, the problem is \NP-complete. Curiously, the problem is still open when the goal is to decide if a graph has $\sigma(G)=3$. Split graphs are known to be $3$-admissible~\cite{Panda} and therefore the \textsc{$t$-admissibility problem} partitions the class into $3$ subclasses (see Figure~\ref{particao_admissiblidade}): ($\sigma=1$)-split graphs (bi-stars, i.e., trees with $n$ vertices and at least $n-2$ leaves), ($\sigma=2$)-split graphs or ($\sigma=3$)-split graphs. From the literature~\cite{zbMATH02614481}, we know how to color bi-stars, in both variants (edge and total coloring). Thus, in order to fully classify the problems of edge coloring and total coloring for split graphs, we are left with the study of split graphs with $ \sigma(G)=2$ and $\sigma(G)=3$. 

Although there is a general algorithm~\cite{Cail} that determines whether a graph has stretch index equal to $2$, Theorem~\ref{caracterizacao_split2adm} is a specific result concerning split graphs~\cite{Couto} and is fundamental to the results of this paper. Firstly, we must enunciate the following observation.

\begin{observation}
    \label{obs:pendant_vertex}
    Let $G=(V,E)$ be a graph and let $P~=~\{~v~\in~{V}~|~d(v)~=~1\}$. Then, $\sigma(G)=\sigma(G\setminus{P})$.
\end{observation}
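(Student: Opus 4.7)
My plan is to establish the equality as a pair of inequalities, each obtained by transporting a single optimal tree spanner across the operation $G\mapsto G\setminus P$. The key structural remark is that every pendant vertex $v\in P$ has a unique neighbor in $G$, so in any spanning tree of $G$ that edge is forced and $v$ is a leaf; conversely, re-attaching $v$ to that same neighbor extends any spanning tree of $G\setminus P$ to a spanning tree of $G$ by adding a bridge to a new leaf.

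For the inequality $\sigma(G\setminus P)\leq \sigma(G)$, I would start from a tree $\sigma(G)$-spanner $T$ of $G$ and prune all vertices of $P$. Since each such vertex is a leaf of $T$, the pruned $T'$ is still connected and acyclic, hence a spanning tree of $G\setminus P$. For any edge $xy$ of $G\setminus P$, the $x$--$y$ path in $T$ has no leaf as an internal vertex, so it survives intact in $T'$ with the same length, giving $d_{T'}(x,y)=d_{T}(x,y)\leq \sigma(G)$.

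For the reverse inequality, I would start from a tree $\sigma(G\setminus P)$-spanner $T'$ of $G\setminus P$ and, for each $v\in P$ with unique neighbor $u_v$, append the edge $vu_v$. This produces a spanning tree $T$ of $G$ in which every edge of $G\setminus P$ still has distance at most $\sigma(G\setminus P)$, and every edge incident to a pendant vertex lies in $T$ itself and therefore has distance exactly $1$. Hence $\sigma(G)\leq \sigma(G\setminus P)$.

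The point I would flag as needing a careful check is that $G\setminus P$ is itself a legitimate input for $\sigma(\cdot)$: any internal vertex of a path in $G$ has degree at least $2$ and so lies outside $P$, which shows that removing $P$ preserves connectivity. The only degenerate situation is $G=K_2$, where $G\setminus P$ is empty; this must be dealt with by convention ($\sigma(K_2)=1$, and $\sigma$ of the empty graph taken as $1$ or simply excluded), but it plays no role in the subsequent use of the observation for split graphs.
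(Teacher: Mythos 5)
Your proof is correct and rests on the same idea the paper uses, namely that every pendant edge is forced into any spanning tree and cannot affect the stretch of other edges; the paper states this only as a one-line remark, whereas you formalize it as two inequalities (pruning leaves from an optimal spanner of $G$, and re-attaching them to an optimal spanner of $G\setminus P$). Your extra care about connectivity of $G\setminus P$ and the degenerate case $G=K_2$ goes slightly beyond the paper's treatment but does not change the approach.
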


Observation~\ref{obs:pendant_vertex} states that a vertex with degree equal to one does not influence the calculation of the stretch index of a graph, since this edge is in any tree $t$-spanner of the graph. From now on, we will call vertices with degree equal to one as \emph{pendant vertices}. And the edge incident to a pendant vertex is a \emph{pendant edge}.

Theorem~\ref{caracterizacao_split2adm} characterizes $(\sigma=2)$-split graphs.\\

\begin{theorem}\cite{Couto}
    Let $G=((X,Y),E)$ be a split graph such that $\forall{y}\in{Y},~ d_G(y)>1$. Then $\sigma(G)=2$ if, and only, $G$ has a universal vertex, i.e., a vertex which is adjacent to every other vertex of $G$.
    \label{caracterizacao_split2adm}
\end{theorem}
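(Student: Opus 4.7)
My plan is to prove the equivalence by handling the two directions separately, with nearly all of the work going into the forward implication.

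For the backward direction, I would take a universal vertex $u$ of $G$ and exhibit the spanning star centered at $u$ as a tree $2$-spanner: every edge $vw\in E(G)$ satisfies $d_T(v,w)\le 2$ via the path through $u$, giving $\sigma(G)\le 2$. To promote this to $\sigma(G)=2$ in the non-trivial case, I would observe that the hypothesis $d_G(y)\ge 2$ together with the maximality of $X$ forces $|X|\ge 3$ (otherwise any $y\in Y$ would have at most one $X$-neighbor by maximality, forcing $Y=\emptyset$), and so $G$ contains a triangle and is not a tree.

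For the forward direction, let $T$ be a tree $2$-spanner of $G$. The first key move is to study the minimal subtree $T_X$ of $T$ spanning $X$: because $X$ is a clique of $G$, every pair $x,x'\in X$ satisfies $d_T(x,x')\le 2$, so $T_X$ has diameter at most $2$ and is therefore a star. Let $c$ denote its center. I would then argue that $c\in X$: if instead $c\in Y$, every edge $cx$ of the star lies in $E(T)\subseteq E(G)$, making $X\cup\{c\}$ a clique and violating the maximality of $X$. Consequently $c$ is adjacent in $G$ to every vertex of $X\setminus\{c\}$, and all that remains is to verify that $c$ is adjacent to every $y\in Y$.

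That last step is where I expect the main obstacle. I would root $T$ at $c$; the star structure of $T_X$ places every $x\in X\setminus\{c\}$ at depth exactly $1$. Fix $y\in Y$ and suppose, for contradiction, that $cy\notin E(G)$. Since $Y$ is independent and $E(T)\subseteq E(G)$, the parent of $y$ in $T$ must lie in $X$, and it cannot be $c$, so it is some $x^*\in X\setminus\{c\}$, placing $y$ at depth $2$. The hypothesis $d_G(y)\ge 2$ then yields an $X$-neighbor $x_1$ of $y$ different from both $x^*$ and $c$; since $x_1$ also lies at depth $1$, the unique $T$-path from $x_1$ to $y$ is $x_1\text{--}c\text{--}x^*\text{--}y$, of length $3$, violating the stretch bound $d_T(x_1,y)\le 2$. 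Hence $cy\in E(G)$ for every $y\in Y$, so $c$ is universal. The delicate part is weaving together three ingredients---the depth bound $d_T(x,c)\le 1$ imposed by $T_X$, the independence of $Y$, and the degree hypothesis on $Y$---so that the tree is forced to make a long detour through $c$ and the stretch bound is broken.
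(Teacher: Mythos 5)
Your argument is correct. Note that the paper does not prove this statement itself (it is quoted from the cited reference), so there is no in-paper proof to compare against; judged on its own, your proof is sound and self-contained: the spanning star gives the backward direction, and in the forward direction the minimal subtree of $T$ spanning the clique $X$ indeed has all its leaves in $X$, hence diameter at most $2$, hence is a star whose center $c$ must lie in $X$ by maximality of $X$, and the final contradiction via the length-$3$ path $x_1$--$c$--$x^*$--$y$ correctly uses the independence of $Y$ and the hypothesis $d_G(y)\ge 2$. Your handling of the degenerate situations ($|X|\le 2$ forcing $Y=\emptyset$, i.e.\ $G\in\{K_1,K_2\}$, where the statement only holds in the trivial sense) is a reasonable reading of the intended, non-trivial setting.
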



In the next chapters, we characterize $(\sigma=2)$-split graphs which are Class 2 or Type 2, and provide polynomial-time algorithms to color Class 1 and Type 1 graphs.

\begin{figure}[H]
        \label{particao_admissiblidade}
        \includegraphics[scale=0.20]{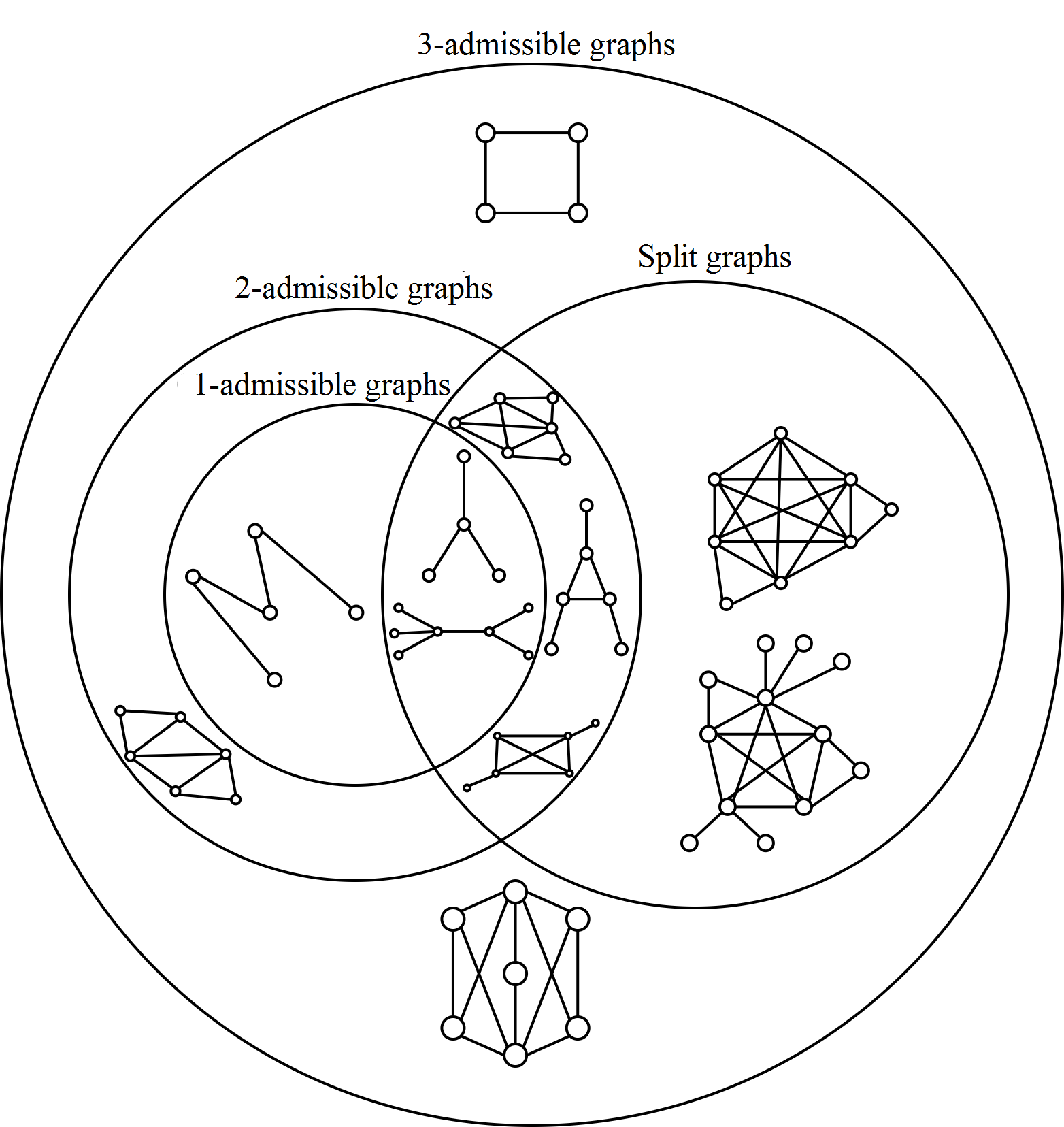}
        \centering
        \caption{Intersection between split graphs and $3$-admissible graphs. Note that any $t~$-~admissible graph is a $k$-admissible graph for $k>t$. But the converse is not true.}
    \end{figure}

\section{Edge-coloring of~\boldmath\texorpdfstring{($\sigma=2$)}{sigma=2}-split graphs}\label{sec:approach}

In this section we characterize $(\sigma=2)$-split graphs which are Class~$2$ and we present a polynomial time algorithm to color the edges of $(\sigma=2)$-split graphs that are Class~$1$. Figure~\ref{diagramaCA} illustrates the state of the art of the \textsc{classification problem} considering split graphs. All examples depicted in intersection areas of such a diagram were constructed from some minimal forbidden induced subgraphs of the considered classes. Note that the intersection between split-interval graphs, split non-comparability graphs and split graphs with $\sigma=3$ is empty. The main idea to prove this consists in trying to construct a graph, supposing by contradiction that one exists, from the only minimal forbidden induced subgraph of split comparability graphs that is not forbidden for split-interval graphs, which  is depicted in Figure~\ref{fig:proibido}. Since we do not want to obtain a $(\sigma=2)$-split graph, we must not have a vertex in the clique adjacent to each vertex of the independent set whose degree is equal to $2$. In all possible cases, we obtain either a forbidden induced subgraph for split-interval graphs or a $(\sigma=2)$-split graph, and thus, we conclude that this intersection is empty.

In this work, we are focused on the intersection of the red circle and the blue circle of Figure~\ref{diagramaCA}, i.e., $(\sigma=2)$-split graphs. Although there are some previous results in the literature for split subclasses which have $\sigma=2$, there are infinite examples of split graphs that could not be classified as Class~$1$ or $2$ before this work.
See~\cite{CARMENORTIZ1998209}, \cite{ComparabilityOrtiz}, \cite{Cruz}, \cite{Gonzaga} and \cite{doi:10.1137/1.9780898719796} for references of other graph classes presented in the diagram.

\begin{figure}[H]
  \centering
  \subfloat[]{\includegraphics[width=.23\textwidth]{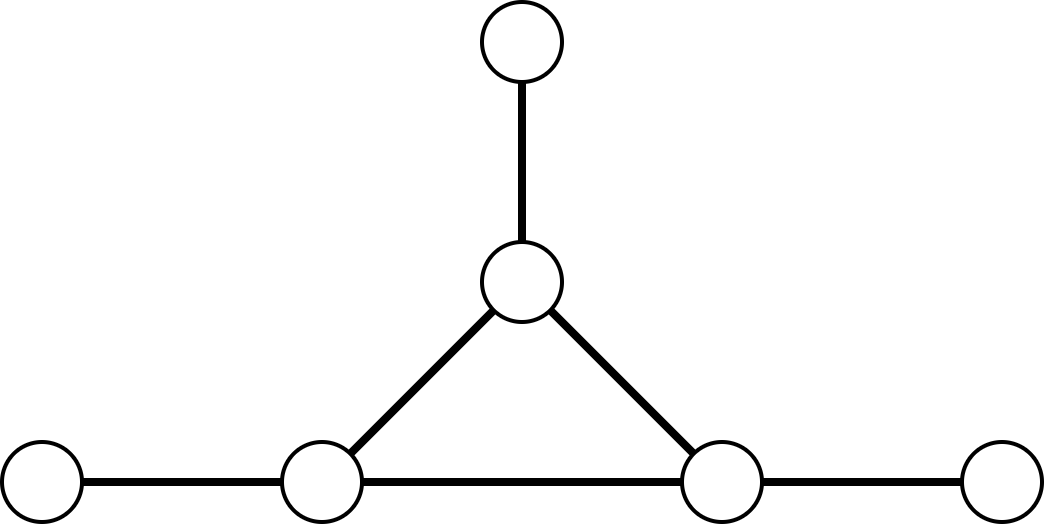}}\hspace{1.5em}%
  \subfloat[]{\includegraphics[width=.23\textwidth]{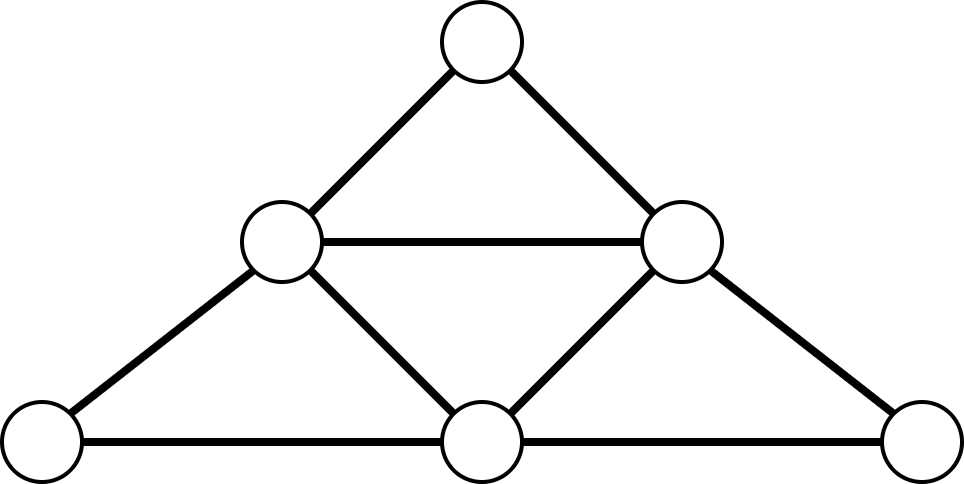}}\hspace{1.5em}\\%
  \subfloat[]{\includegraphics[width=.23\textwidth]{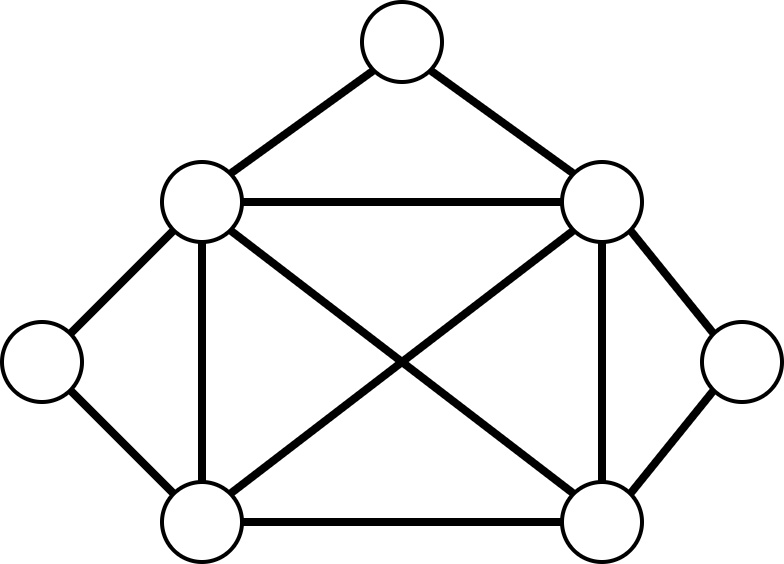}}\hspace{1.5em}%
  \subfloat[]{\includegraphics[width=.23\textwidth]{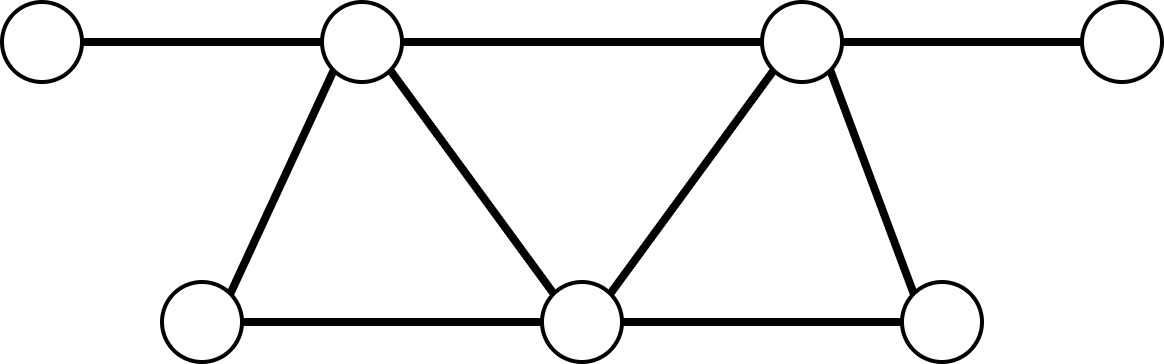}}
  \caption{(a) and (b) Minimal forbidden induced subgraphs which are common to both split-interval and split-comparability graphs. (c) Minimal forbidden induced subgraph for split-interval graphs, exclusively. (d) Minimal forbidden induced subgraph for split-comparability graphs, exclusively.}
  \label{fig:proibido}
\end{figure}    

The following definitions and theorems are needed from now on. 

A graph $G$ is said to be \emph{overfull} if $|E(G)|>\Delta(G)\cdot\floor*{\frac{n}{2}}$, where $n=|V(G)|$. 

A graph $G$ is \emph{subgraph-overfull} if $\exists{H}\subseteq{G}$ such that $\Delta(H)=\Delta(G)$ and $H$ is overfull. A graph $G$ is \emph{neighborhood-overfull} if $H$ is induced by a vertex $v$ such that $d(v)=\Delta(H)$ and by all its neighbors. The Observation~\ref{obs:class2} states a sufficiente condition for a graph $G$ to be Class~2.

\begin{observation}
    If a graph $G$ is overfull, subgraph-overfull or neighborhood-overfull, then $G$ is Class~2.
    \label{obs:class2}
\end{observation}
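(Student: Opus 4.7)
The plan is to prove the three cases in increasing order of generality, showing that each successive notion reduces to (or implies) the previous one, so that the overfull case carries all the work.

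First, I would handle the overfull case directly by the standard matching-counting argument. In any proper edge coloring of a graph on $n$ vertices, each color class is a matching, hence contains at most $\lfloor n/2 \rfloor$ edges. If $G$ were Class~$1$ and admitted a proper edge coloring with $\Delta(G)$ colors, we would get $|E(G)| \le \Delta(G) \cdot \lfloor n/2 \rfloor$, contradicting the overfull assumption. Therefore $\chi'(G) \ge \Delta(G)+1$, which together with Vizing's theorem gives $\chi'(G)=\Delta(G)+1$, so $G$ is Class~$2$.

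Second, I would dispatch the subgraph-overfull case by monotonicity of the chromatic index. If $H \subseteq G$ satisfies $\Delta(H)=\Delta(G)$ and $H$ is overfull, then the first case yields $\chi'(H) \ge \Delta(H)+1 = \Delta(G)+1$. Since any proper edge coloring of $G$ restricts to a proper edge coloring of $H$, we have $\chi'(G) \ge \chi'(H) \ge \Delta(G)+1$, so $G$ is Class~$2$.

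Finally, I would reduce the neighborhood-overfull case to the subgraph-overfull case. Let $v$ be a vertex of degree $\Delta(G)$ whose closed neighborhood induces an overfull subgraph $H$. By construction, every edge of $G$ incident to $v$ lies in $H$, so $d_H(v)=d_G(v)=\Delta(G)$; in particular $\Delta(H)=\Delta(G)$. Thus $H$ witnesses that $G$ is subgraph-overfull, and the previous step applies.

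There is no real obstacle here: the entire observation is essentially the content of the matching-size lower bound $\chi'(G) \ge \lceil |E(G)|/\lfloor n/2 \rfloor \rceil$, together with the fact that $\chi'$ is monotone under taking subgraphs. The only point requiring slight care is to verify, in the neighborhood-overfull reduction, that $v$ retains its full degree in $H$ so that the maximum degree of $H$ matches that of $G$; this is immediate from the definition of $H$ as the subgraph induced by $v$ and all its neighbors.
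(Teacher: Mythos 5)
Your proof is correct and is exactly the standard argument the paper implicitly relies on: the paper states this as an observation without proof, taking for granted the matching-size bound $|E| \le \chi'(G)\cdot\lfloor n/2 \rfloor$, the monotonicity of $\chi'$ under subgraphs with the same maximum degree, and the fact that neighborhood-overfull is a special case of subgraph-overfull (since $d_{G[N[v]]}(v)=d_G(v)=\Delta(G)$). Nothing in your write-up deviates from or adds gaps to that standard reasoning.
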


In particular for split graphs, we have an equivalence of the concepts of being subgraph-overfull and neighborhood-overfull, as stated in Theorem~\ref{thm:eq_split}.

\begin{theorem}\emph{\cite{Figueiredo1995LocalCF}}
    Let $G$ be a split graph. Then, $G$ is subgraph-overfull if and only if $G$ is neighborhood-overfull.
    \label{thm:eq_split}
\end{theorem}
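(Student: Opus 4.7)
The ``if'' direction is immediate: if $v \in V(G)$ realizes $d_G(v) = \Delta(G)$ and $H := G[N_G[v]]$ is overfull, then $H$ is a subgraph of $G$ with $\Delta(H) = d_H(v) = \Delta(G)$, so $G$ is subgraph-overfull by definition.

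For the ``only if'' direction, suppose $H \subseteq G$ is overfull with $\Delta(H) = \Delta(G)$. Replacing $H$ by $G[V(H)]$ only adds edges and cannot raise $\Delta(H)$ above $\Delta(G)$, so I may assume $H$ is induced. Pick $v \in V(H)$ with $d_H(v) = \Delta(G)$; then $d_G(v) = \Delta(G)$ and $N_G[v] \subseteq V(H)$. Writing $G = ((X,Y),E)$ with $X$ a maximum clique, the maximality of $X$ forces $d_G(y) \le |X|-1$ for every $y \in Y$, whereas every $u \in X$ has $d_G(u) \ge |X|-1$. Unless $G$ is complete (in which case the statement is immediate), it follows that $\Delta(G) \ge |X|$, and hence the max-degree vertex $v$ must lie in $X$, giving $X \subseteq N_G[v]$.

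Set $H' := G[N_G[v]]$ and $S := V(H) \setminus V(H')$. Since $X \subseteq V(H')$ we have $S \subseteq Y$, and since $Y$ is independent every edge of $H$ incident with $S$ goes to $X \subseteq V(H')$, yielding $|E(H)| - |E(H')| = \sum_{s \in S}|N_G(s) \cap X|$ and $|V(H)| - |V(H')| = |S|$. Setting $\phi(F) := 2|E(F)| - \Delta(G)(|V(F)|-1)$, an odd-order subgraph $F$ with $\Delta(F) = \Delta(G)$ is overfull iff $\phi(F) \ge 1$; a direct computation gives
\[
\phi(H') \;=\; \phi(H) \;+\; \Delta(G)\,|S| \;-\; 2\sum_{s \in S}|N_G(s) \cap X|.
\]
To conclude that $H'$ is overfull I must establish (i) $|V(H')| = \Delta(G) + 1$ is odd, i.e.\ $\Delta(G)$ is even, and (ii) $\sum_{s \in S}|N_G(s) \cap X| \le \Delta(G)\,|S|/2$.

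The main obstacle will be deriving (i) and (ii) from the single hypothesis $\phi(H) \ge 1$, because when $\Delta(G)$ is odd the theorem reduces to the nontrivial claim that no such $H$ exists at all. My plan is to exploit the defect restatement of overfullness, $\sum_{w \in V(H)}(\Delta(G) - d_H(w)) < \Delta(G)$, and to decompose the defect sum through the split structure: each $u \in X$ contributes $|N_Y(v)| - |N_Y(u) \cap (N_Y(v) \cup S)|$, each $y \in N_Y(v)$ contributes $\Delta(G) - |N_G(y) \cap X|$, and each $s \in S$ contributes $\Delta(G) - |N_G(s) \cap X|$. Isolating the $S$-contribution in this bounded sum should simultaneously pin the parity of $\Delta(G)$ and produce the inequality needed in (ii). If a chosen $v$ leaves some $s \in S$ with $|N_G(s) \cap X| > \Delta(G)/2$, the fallback is to reselect $v$ to a max-degree $X$-vertex in $N_G(s)$ so that $s$ is absorbed into $N_G[v]$ and $|S|$ strictly decreases; iteration must terminate either with $S = \emptyset$ (whence $H' = H$ is overfull) or with an $S$ satisfying (ii), completing the proof.
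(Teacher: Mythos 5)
The paper does not prove this statement -- it is quoted from de Figueiredo, Meidanis and de~Mello -- so your attempt can only be judged on its own terms, and as it stands it has a genuine gap. Your reduction is fine up to a point: the ``if'' direction, passing to an induced $H$, locating a $\Delta(G)$-vertex $v\in X$ with $N_G[v]\subseteq V(H)$, and the identity $\phi(H')=\phi(H)+\Delta(G)|S|-2\sum_{s\in S}|N_G(s)\cap X|$ are all correct. But the proof stops exactly at the hard part: conditions (i) ($\Delta(G)$ even) and (ii) ($\sum_{s\in S}|N_G(s)\cap X|\le \Delta(G)|S|/2$) are never derived; you explicitly defer them to a ``plan''. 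Condition (i) is not a side issue -- it is equivalent to the nontrivial fact that split graphs with odd maximum degree have no overfull subgraph of maximum degree $\Delta(G)$ at all, and nothing in your defect decomposition as written pins this parity. Moreover, the proposed fallback is unsound: if some $s\in S$ has $|N_G(s)\cap X|>\Delta(G)/2$, you want to reselect $v$ as a maximum-degree $X$-vertex in $N_G(s)$, but $s$ need not have \emph{any} neighbour of degree $\Delta(G)$ (the new $v$ must still be a $\Delta(G)$-vertex for neighbourhood-overfullness), and even when it does, replacing $v$ replaces the whole set $S$, so there is no monotone quantity guaranteeing that the iteration terminates with $|S|$ decreasing. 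Pointwise bounds of the form $|N_G(s)\cap X|\le\Delta(G)/2$ are in fact false in general, so this route cannot be patched locally.

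The missing idea is an aggregate count that uses the degree cap on the clique side rather than on $S$. With $k=|X|$, $q=|N_Y(v)|$, so $\Delta(G)=k-1+q$ with $q\ge 1$: every $x\in X$ satisfies $d_G(x)\le\Delta(G)$, hence has at most $q$ neighbours in $Y$, so $H$ has at most $kq$ cross edges and $2|E(H)|\le k(k-1)+2kq$. Feeding this into $2|E(H)|\ge\Delta(G)\bigl(|V(H)|-1\bigr)+2$ with $|V(H)|=k+q+|S|$ yields $\Delta(G)\,|S|\le\Delta(G)-2-q(q-1)$, which forces $S=\emptyset$. Then $V(H)=N_G[v]$, so $H'=H$ is overfull, and $|V(H)|=\Delta(G)+1$ odd gives the parity of $\Delta(G)$ for free; this settles both (i) and (ii) simultaneously and completes the ``only if'' direction. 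Without some argument of this kind, your submission is a correct setup plus an unproven core claim, not a proof.
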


Moreover, the following results concerning the {\sc classification problem} are going to be helpful.

\begin{theorem}\cite{Plantholt}
    Let $G=(V,E)$ be a graph. If $|V|$ is odd and $G$ has a universal vertex, then $G$ is Class~$2$ if and only if $G$ is subgraph-overfull.
    \label{thm:Plantholt}
\end{theorem}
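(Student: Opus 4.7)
The reverse direction is immediate from Observation~\ref{obs:class2}: if any subgraph of $G$ with the same maximum degree is overfull, then $G$ is Class~2. So the content is in the forward direction, which I would prove by contrapositive: assume $G$ has odd order $n$, a universal vertex $u$, and no $\Delta(G)$-overfull subgraph; I will exhibit a proper $\Delta(G)$-edge-coloring of $G$.

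Since $u$ is universal, $\Delta(G) = n-1$, which is even because $n$ is odd. In any $(n-1)$-edge-coloring of $G$, each color class is a matching of size at most $(n-1)/2$, and because $u$ has degree $n-1$ every color class must contain exactly one edge at $u$. Hence a valid coloring is equivalent to a pair $(\varphi, \{M_k'\}_{k=1}^{n-1})$ where $\varphi : \{1,\ldots,n-1\} \to V(G)\setminus\{u\}$ is a bijection assigning the $u$-edge of color $k$ to $u\varphi(k)$, and $\{M_k'\}$ is a partition of $E(G-u)$ into matchings with $\varphi(k)\notin V(M_k')$ for each $k$. The task thus reduces to finding a near-$1$-factorization of $G-u$ with a prescribed missing-vertex pattern.

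The idea is to begin with a $1$-factorization of $K_{n-1}$ on $V(G)\setminus\{u\}$, which exists because $n-1$ is even (so $K_{n-1}$ is Class~1, with each color class a perfect matching). Intersecting each perfect matching with $E(G-u)$ yields $n-1$ matchings of $G-u$; the non-edges of $G-u$ (edges of $K_{n-1}$ absent from $G$) then have to be redistributed among color classes consistently with the missing-vertex constraint. The hypothesis that no subgraph of $G$ is $(n-1)$-overfull provides precisely the slack needed: for every $W\subseteq V(G)\setminus\{u\}$, the number of edges of $G$ inside $N[W]$ is small enough to leave room for a compatible reassignment.

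The main obstacle, which I expect to be the technical heart of the proof, is to promote the global non-subgraph-overfull assumption to the combinatorial guarantee that such a redistribution exists. I would model the constraints as a bipartite feasibility problem (or a flow network) between the non-edges of $G-u$ and the color classes, with color $k$ forbidden for any non-edge incident to the prescribed missed vertex at step $k$, and verify Hall's condition by arguing that a violating subset would yield an overfull induced subgraph of $G$. This deficiency-versus-overfullness translation is exactly the step that encodes Plantholt's theorem, and completing it rigorously (by a careful counting or alternating-path argument) is the part I expect to require real work.
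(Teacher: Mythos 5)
This statement is quoted in the paper from~\cite{Plantholt} and is not proved there, so the only question is whether your argument stands on its own --- and it does not yet. The easy direction (via Observation~\ref{obs:class2}) and the reduction are fine: with a universal vertex $u$ and $n$ odd, $\Delta(G)=n-1$ is even, every colour appears exactly once at $u$, and a $\Delta$-edge-colouring of $G$ is equivalent to a partition of $E(G-u)$ into $n-1$ matchings together with a bijection $\varphi$ assigning to each colour a vertex it misses. But the forward direction is exactly where you stop: the step ``a violating subset of Hall's condition would yield an overfull subgraph'' is the entire content of Plantholt's theorem, and you explicitly defer it. Note also that for a graph with a universal vertex, subgraph-overfull collapses to a \emph{single} global inequality $|E(G)|\le (n-1)^2/2$ (any subgraph $H$ with $\Delta(H)=n-1$ must span $V(G)$), so the ``for every $W\subseteq V(G)\setminus\{u\}$'' slack you invoke is not actually available as a family of local conditions; squeezing all the needed local feasibility out of one global edge count is precisely the hard part, and no counting or alternating-path argument is supplied.

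There are also concrete defects in the scaffold itself. A $1$-factorization of $K_{n-1}$ (with $n-1$ even) has $n-2$ perfect matchings, not $n-1$, so intersecting it with $E(G-u)$ does not even produce the required number of colour classes without further surgery. More seriously, for a \emph{fixed} factorization the bipartite ``vertex--colour'' feasibility problem you describe can fail even when $G$ is not overfull; the known proofs do not apply Hall's theorem to a frozen factorization but proceed by induction with recolouring (Kempe-chain--style) arguments, modifying the colour classes as pendant structure and non-edges are absorbed. So the proposal is a plausible plan, not a proof: the decisive implication ``not subgraph-overfull $\Rightarrow$ $\Delta$-edge-colourable'' remains unestablished, and the specific route chosen would need substantial repair (correct class count, and a mechanism to alter the factorization) before the Hall-type analysis could even be attempted.
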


\begin{theorem}\cite{TotalBehzad}
     Let $G=(V,E)$ be a graph. If $|V|$ is even and $G$ has a universal vertex, then $G$ is Class~$1$.
     \label{thm:Behzad}
\end{theorem}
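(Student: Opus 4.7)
The plan is to derive this from the well-known classical fact that $\chi'(K_n) = n-1$ whenever $n$ is even, by exhibiting $G$ as a spanning subgraph of $K_n$ on which a proper edge coloring of $K_n$ restricts to a proper edge coloring of $G$ using no more than $\Delta(G)$ colors.

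First I would set $n = |V|$ and let $u \in V$ be the promised universal vertex. Since $u$ is adjacent to every other vertex, $d_G(u) = n-1$, and therefore $\Delta(G) \geq n-1$; the reverse inequality is trivial, so $\Delta(G) = n-1$. Because $G$ is simple, it is a spanning subgraph of the complete graph $K_n$, and it suffices to produce a proper edge coloring of $G$ with $n-1$ colors.

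Next I would invoke the classical result that, when $n$ is even, $K_n$ is Class~1 with $\chi'(K_n) = \Delta(K_n) = n-1$. This is standard (a round-robin construction: fix one vertex at the center and arrange the other $n-1$ vertices on a circle; rotating a fixed near-perfect matching yields the $n-1$ colour classes). Fix any such proper $(n-1)$-edge-coloring $\varphi$ of $K_n$. Now define $\varphi' := \varphi\!\restriction_{E(G)}$, the restriction of $\varphi$ to the edge set of $G$. Since adjacent edges of $G$ are adjacent in $K_n$ and already receive distinct colors under $\varphi$, $\varphi'$ is a proper edge coloring of $G$ using at most $n-1 = \Delta(G)$ colors. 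Hence $\chi'(G) = \Delta(G)$, i.e., $G$ is Class~1.

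The one nontrivial ingredient is the fact that $\chi'(K_n) = n-1$ for even $n$, but this is a textbook result and the round-robin construction cited above supplies an explicit witness; no further obstacle arises, since every other step is either a definition or a trivial restriction argument.
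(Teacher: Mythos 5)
Your proof is correct: the universal vertex forces $\Delta(G)=n-1$, and restricting a proper $(n-1)$-edge-coloring of $K_n$ (which exists for even $n$ by the standard $1$-factorization / round-robin construction) to the spanning subgraph $G$ gives $\chi'(G)=\Delta(G)$. The paper states this theorem only as a citation to the literature without reproducing a proof, and your argument is exactly the standard one behind that cited result, so there is nothing to add.
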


\begin{theorem}\cite{Chen}
    Let $G$ be a split graph. If $G$ has and odd maximum degree, then $G$ is Class~$1$.
\end{theorem}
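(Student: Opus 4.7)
The plan is to construct an explicit proper edge coloring of $G$ with $\Delta$ colors, exploiting the clique/independent-set partition. Let $X$ be a maximum clique of $G$ and $Y$ the independent set, with $\omega = |X|$; by maximality of $X$, every $y \in Y$ satisfies $d_G(y) \leq \omega - 1$. I would split the argument on the parity of $\omega$, since this controls $\chi'(K_\omega)$.

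In the case $\omega$ even, $K_\omega$ is Class 1, so I start by properly coloring the clique edges with the $\omega-1$ colors $\{1,\ldots,\omega-1\}$ in such a way that each $x \in X$ sees every one of these colors exactly once on its clique edges. Then the edges of the bipartite graph $B$ of $X$--$Y$ edges incident to $x$ must use colors from the complementary palette $\{\omega,\ldots,\Delta\}$; this palette has $\Delta - \omega + 1$ colors, which matches the demand $d_B(x) = d_G(x) - (\omega-1) \leq \Delta - \omega + 1$. In the case $\omega$ odd, $K_\omega$ is Class 2 with $\chi'(K_\omega) = \omega$, and any optimal edge coloring leaves at each $x \in X$ exactly one distinct missing color $\mu(x) \in \{1,\ldots,\omega\}$ (these are precisely the edges of the matching that would complete a $1$-factorization of $K_{\omega+1}$ if a phantom vertex were appended to $X$). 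Then the $B$-edges at $x$ may legally be colored from $L(x) = \{\mu(x)\} \cup \{\omega+1,\ldots,\Delta\}$, again a list of size $\Delta - \omega + 1 \geq d_B(x)$.

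In either case, the remaining task is to extend the clique coloring to a proper edge coloring of the bipartite graph $B$, respecting the prescribed list $L(x)$ at each $x \in X$ and the constraint that edges at any $y \in Y$ receive distinct colors. Since $B$ is bipartite, each $X$-side list matches the local degree, and $d_B(y) \leq \omega - 1 \leq \Delta$, a bipartite list-edge-coloring argument (e.g., Galvin's theorem, or a direct König-style alternating-chain construction) completes the coloring.

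The main obstacle is the tight regime where $\omega - 1 > \Delta - \omega + 1$, so some $y \in Y$ demands more colors than the "new" palette alone can supply on the $X$ side — the clique coloring must be chosen coherently with the structure of $B$, or else rearranged by Vizing-fan / Kempe-chain swaps. Here the hypothesis that $\Delta$ is odd is essential: the closed neighborhood $N[v]$ of any degree-$\Delta$ vertex has $\Delta+1$ (even) vertices, so $|E(G[N[v]])| \leq \binom{\Delta+1}{2} = \Delta\lfloor(\Delta+1)/2\rfloor$, which forbids neighborhood-overfullness and hence, by Theorem~\ref{thm:eq_split}, subgraph-overfullness. By Observation~\ref{obs:class2} this eliminates the only structural obstruction to Class 1 status that a Vizing-fan argument can fail to circumvent, so the recoloring step always closes and delivers a proper $\Delta$-edge coloring.
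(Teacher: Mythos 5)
This statement is quoted by the paper from~\cite{Chen} and is not proved in the text, so the comparison can only be against your argument itself; and there your proposal has a genuine gap at its decisive step. Your parity observation is fine: if $\Delta$ is odd then $|N[v]|=\Delta+1$ is even, so $G[N[v]]$ has at most $\binom{\Delta+1}{2}=\Delta\left\lfloor\frac{\Delta+1}{2}\right\rfloor$ edges and can never be overfull, and with Theorem~\ref{thm:eq_split} this rules out subgraph-overfullness. But the conclusion you draw from it --- that the absence of this obstruction guarantees that a Vizing-fan/Kempe-chain recoloring ``always closes'' with $\Delta$ colors --- is not a theorem. ``Not subgraph-overfull implies Class~1'' is exactly the open statement of Conjecture~\ref{con:Celina} (and the relevant case of Conjecture~\ref{con:overfull}); Vizing's fan argument only certifies $\Delta+1$ colors, and non-overfullness does not by itself make it terminate with $\Delta$ (the Petersen graph is Class~2 yet not subgraph-overfull, so the general principle you invoke is false, and for split graphs it is precisely the unresolved conjecture). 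So the last paragraph of your proof assumes a statement at least as strong as the theorem being proved.

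The extension step before that is also not merely ``tight'' but can be unfixable from your starting position. Take $X=\{x_1,\dots,x_6\}$ a $K_6$, $y$ adjacent to $x_1,\dots,x_5$ and $y'$ adjacent only to $x_1$; then $\Delta=7$ is odd, $\omega=6$ is even, and your even-$\omega$ scheme gives every $x\in X$ the same complementary palette $\{6,7\}$ of size $\Delta-\omega+1=2$, while $d(y)=5$, so the five cross edges at $y$ cannot possibly be colored from that palette: no extension of the chosen $(\omega-1)$-coloring of $K_\omega$ exists, even though the graph is Class~1. Likewise, in the odd-$\omega$ case the edge lists $\{\mu(x)\}\cup\{\omega+1,\dots,\Delta\}$ may be smaller than $\Delta(B)$, so Galvin's theorem does not apply. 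The entire difficulty of the theorem is choosing the (near-)factorization of the clique, i.e., the assignment of missing colors and the reuse of clique colors on cross edges, compatibly with the bipartite part; that is the constructive content of the cited result of Chen, Fu and Ko, and it is exactly the part your proposal delegates to an unproven recoloring claim. As written, the proposal does not establish the statement.
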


\begin{figure}[H]
        \includegraphics[scale=0.55]{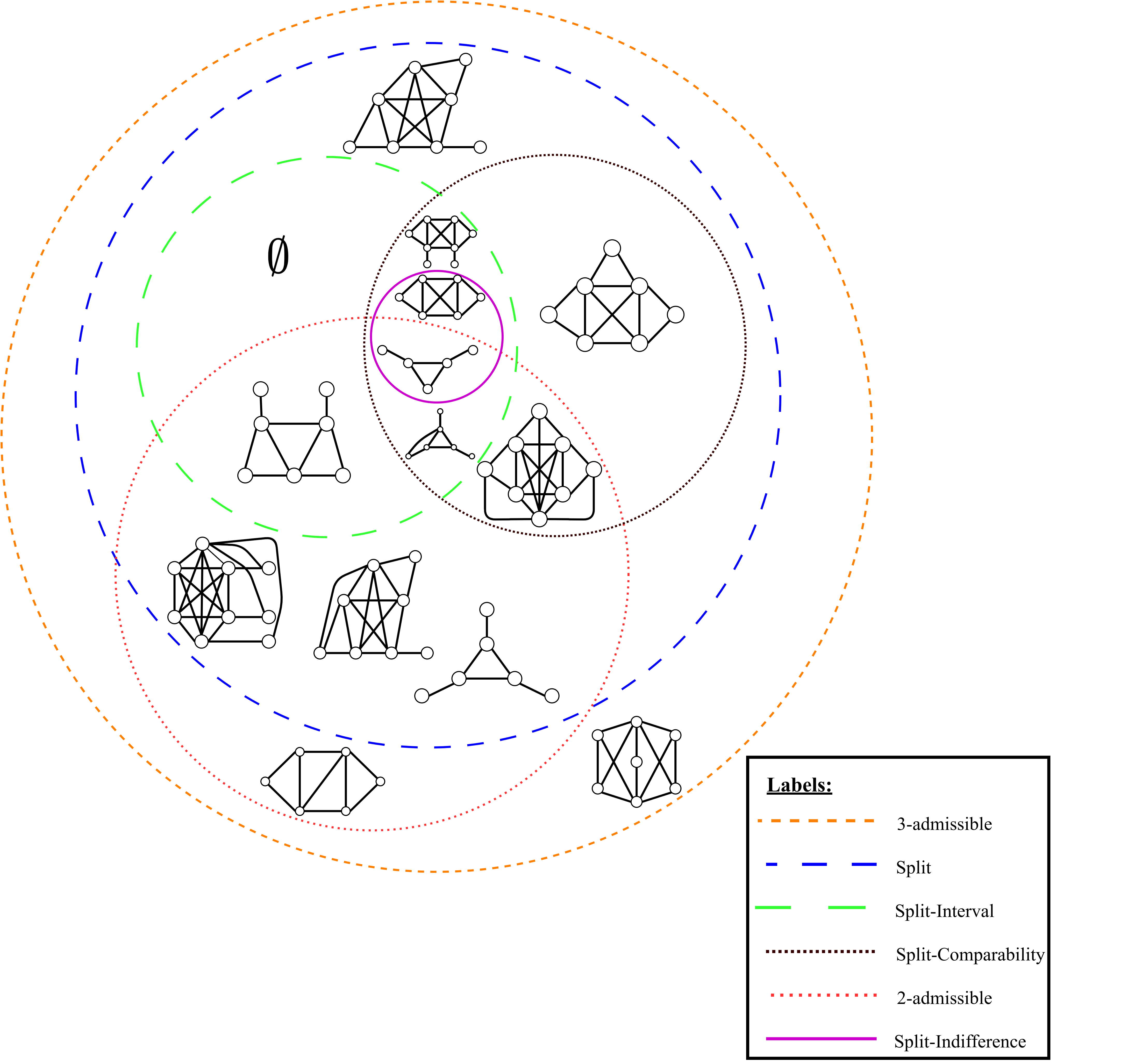}
        \caption{State of the art of the study of the \textsc{classification problem} in split graphs. \label{diagramaCA}}
        \centering
    \end{figure}

Since split graphs with universal vertices and with odd maximum degree are already fully classified, in this work we consider those without a universal vertex and with even maximum degree. As we deal with $(\sigma=2)$-split graphs, by Theorem~\ref{caracterizacao_split2adm}, the graphs we consider have at least one pendant vertex. Lemma~\ref{thrm:pendant} relates the non-existence of pendant neighbors of a vertex $v$ to the property of $N[v]$ inducing an overfull graph, where $N[v]=N(v) \cup \{v\}$ is the closed neighborhood of $v$.

\begin{lemma}
    Let $G=((X,Y),E)$ be a $(\sigma=2)$-split graph with even maximum degree and $v$ be a vertex such that $d(v)=\Delta(G)$. If $G[N[v]]$ is overfull, then $v$ does not have any pendant neighbor.
    \label{thrm:pendant}
\end{lemma}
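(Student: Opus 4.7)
The plan is to argue by contradiction. Suppose $v$ has a pendant neighbor $u$ and $G[N[v]]$ is overfull. First I would locate $v$ and $u$ in the bipartition. Since every vertex of $X$ has degree at least $|X|-1$ inside the clique, a pendant vertex must lie in $Y$ unless $|X|\le 2$. The corner cases ($|X|\le 2$, or $v\in Y$) each force $\Delta\le 1$ (contradicting evenness and positivity of $\Delta$), or make $|E(G[N[v]])|\le\Delta<\Delta^2/2$ directly. So I may assume $v\in X$ and $u\in Y$, with $v$ being the unique neighbor of $u$.

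Next, I would set $a=|X|-1$ and $b=|N(v)\cap Y|$, so that $d(v)=a+b=\Delta$. Because $\Delta$ is even, $|N[v]|=\Delta+1$ is odd and $\lfloor|N[v]|/2\rfloor=\Delta/2$; since $v$ itself witnesses $\Delta(G[N[v]])=\Delta$, overfullness of $G[N[v]]$ reduces to
\[
|E(G[N[v]])|>\Delta\cdot\frac{\Delta}{2}=\frac{(a+b)^2}{2}.
\]

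Then I would count the edges of $G[N[v]]$ exactly: $\Delta$ edges at $v$, $\binom{a}{2}$ edges inside the clique $X\setminus\{v\}$, and $\sum_{y\in N(v)\cap Y}(d_G(y)-1)$ edges joining $N(v)\cap Y$ to $X\setminus\{v\}$, with no edges inside $N(v)\cap Y$ since $Y$ is independent. Each $y\in Y$ satisfies $d_G(y)\le|X|=a+1$, and the crucial input is that the pendant $u$ contributes $d_G(u)-1=0$. Hence $|E(G[N[v]])|\le\Delta+\binom{a}{2}+(b-1)a$.

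Plugging this upper bound into the overfull inequality and simplifying reduces everything to $b(2-b)>a$. Since $b\ge 1$ (as $u\in N(v)\cap Y$), this can only hold for $b=1$ and $a=0$, which forces $\Delta=1$ and contradicts the evenness of $\Delta$. The algebraic core is short; the main obstacle I foresee is the bookkeeping in the setup, namely ruling out the $v\in Y$ and $|X|\le 2$ corner cases cleanly and confirming that $\Delta(G[N[v]])=\Delta$ so the overfull threshold is exactly $\Delta^2/2$.
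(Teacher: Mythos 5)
Your proof is correct and takes essentially the same route as the paper's: both argue by contradiction, comparing the overfull lower bound $|E(G[N[v]])|>\Delta\cdot\Delta/2$ (using that $\Delta$ is even, so $|N[v]|$ is odd) with an explicit edge count of $G[N[v]]$ in which the pendant neighbor contributes no cross edges, and reducing to an inequality in $|X|$ and $|N_Y(v)|$ that has no admissible solution. Your version is a bit tidier in the bookkeeping (you handle the placement of $v$ and the pendant in the bipartition explicitly and finish via the parity of $\Delta$, where the paper invokes maximality of $X$ and a quadratic-root discussion), but the core counting argument is the same.
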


\begin{proof}
    Suppose, by contradiction, that for every $v$ such that $d(v)=\Delta(G)$, $v$ has at least one pendant vertex in $N_Y(v)$, where $N_Y(v)$ denotes the neighborhood of the vertex $v$ in the independent set $Y$. Consider $|N_Y(v)|=q+1$. Note that $\Delta(G)=(k-1)+q+1=k+q$ (where $|X|=k$) which, by hypothesis, is even. Then $|V(G[N[v]])|=k+q+1$. Since $G[N[v]]$ is overfull, we have that: 
    
    $$|E(G[N[v]])|>\floor*{\frac{k+q+1}{2}}(k+q).$$ 
    
    Since $k+q$ is even, $$\floor*{\frac{k+q+1}{2}}=\frac{k+q}{2}~~~\text{and}~~~E(G[N[v]])|\geq{{\frac{k+q}{2}}(k+q)+1}$$ 
    
    and this implies: 
    
    $$|E(G[N[v]])|\geq{{\frac{(k+q)^2}{2}}+1}.$$
    
    Thus, 
    
    $$|E(G[N[v]])|\geq{{\frac{k^2+2kq+q^2}{2}}+1}.$$ 
    
    Note that this unit added at the end of the expression already represents the edge incident to the considered pendant vertex and therefore, is already omitted in the following expression. The clique $X$ has $\frac{k(k-1)}{2}$ edges, thus we have at least 
    
    $$\frac{k^2+2kq+q^2}{2}-\frac{k^2-k}{2}=\frac{2kq+k+q^2}{2}$$ 
    
    cross edges, i.e., edges between the clique and the neighbors of $v$ in $Y$, where none of these vertices is the considered pendant vertex. Note that, since $X$ is maximal, the greatest number of cross edges incident to a vertex of $Y$ is $(k-1)$. Therefore, as we have $x$ vertices in $G_Y[N[v]]$ that are not necessarily pendant, the maximum number of cross edges is $q(k-1)=kq-q$. Thus,
    
    $$\frac{2kq+k+q^2}{2}-(kq-q)\leq0\Leftrightarrow{q^2+2q+k\leq0}.$$
    
    Solving such inequation, we obtain as roots the values: $-1+\sqrt{1-k}$ and $-(1+\sqrt{1-k})$. The unique case where these expressions assume real values is when $k=1$. As $k$ denotes the size of the maximal clique, we know that $k\geq1$, so the only value that is actually possible for $k$ is $1$. However, when $k=1$, the split graph is a star and consequently, $1$-admissible. This leads us to a contradiction and we conclude that, if $G[N[v]]$ is overfull for some $v\in{X}$ such that $d(v)=\Delta(G)$, then there can not exist any pendant vertex in $N_Y(v)$.
\end{proof}

\vspace{0.5em}
The lemma above leads us to Theorem~\ref{thm:overfull}.

\begin{theorem}
    Let $G=((X,Y),E)$ be a $(\sigma=2)$-split graph, even maximum degree, and let $P$ be the set of pendant vertices of $G$. Then $G$ is neighborhood-overfull if and only if $\exists{v}\in{X}$, such that $v$ is universal in $G[V\setminus{P}]$, $\Delta(G[N[v]])=\Delta(G)$ and $G[N[v]]$ is overfull.
    \label{thm:overfull}
\end{theorem}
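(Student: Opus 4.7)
My plan is to prove both directions by identifying the vertex witnessing neighborhood-overfull with a universal vertex of $G[V\setminus P]$ produced by Theorem~\ref{caracterizacao_split2adm}. The reverse implication should be nearly immediate: if $v\in X$ is universal in $G[V\setminus P]$ and $\Delta(G[N[v]])=\Delta(G)$ with $G[N[v]]$ overfull, then the degree of $v$ inside $G[N[v]]$ equals $d_G(v)$ (every $G$-neighbor of $v$ lies in $N[v]$), and no other vertex of $G[N[v]]$ can exceed $|N[v]|-1=d_G(v)$; hence $d_G(v)=\Delta(G[N[v]])=\Delta(G)$, so $v$ itself is a $\Delta$-vertex whose closed neighborhood is overfull and $G$ is neighborhood-overfull.

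For the forward direction, take a vertex $v$ with $d(v)=\Delta(G)$ and $G[N[v]]$ overfull; I claim the \emph{same} $v$ satisfies the three required properties. First I would place $v$ in $X$: if instead $v\in Y$, maximality of $X$ forces $d(v)\le|X|-1$, which in turn makes every $X$-vertex have degree exactly $|X|-1$ with no neighbor in $Y$, contradicting the existence of a $Y$-pendant (guaranteed by the contrapositive of Theorem~\ref{caracterizacao_split2adm}, since we are considering split graphs without a universal vertex). Lemma~\ref{thrm:pendant} then gives $N_Y(v)\cap P=\emptyset$; writing $k=|X|$ and $q=|N_Y(v)|$, this yields $\Delta(G)=k+q-1$ and $q\le|Y\setminus P|$.

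The crux is showing that $v$ is itself universal in $G[V\setminus P]$. By Observation~\ref{obs:pendant_vertex}, $\sigma(G[V\setminus P])=2$; after checking (in the non-degenerate regime $|X|\ge 3$ that is forced by an overfullness count analogous to Lemma~\ref{thrm:pendant}, yielding $k\ge q^2+2$) that every $Y$-vertex of $G[V\setminus P]$ still has degree at least $2$, Theorem~\ref{caracterizacao_split2adm} supplies a universal vertex $u\in X$ of $G[V\setminus P]$. Since $u$ is then adjacent in $G$ to every vertex of $X\setminus\{u\}$ and to every vertex of $Y\setminus P$, we have $d_G(u)\ge k-1+|Y\setminus P|$, while $d_G(u)\le\Delta(G)=k-1+q$, so $|Y\setminus P|\le q$. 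Together with the earlier bound $q\le|Y\setminus P|$ this forces equality, and $N_Y(v)\subseteq Y\setminus P$ of the same cardinality gives $N_Y(v)=Y\setminus P$, so $v$ is universal in $G[V\setminus P]$ and all three properties hold simultaneously for the same $v$.

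The main obstacle I anticipate is verifying that $G[V\setminus P]$ meets the hypotheses of Theorem~\ref{caracterizacao_split2adm}, namely that removing $P$ does not leave fresh $Y$-pendants and that the small-clique cases $|X|\le 2$ can be ruled out. I plan to handle the former by iterating Observation~\ref{obs:pendant_vertex} if necessary, and the latter by invoking the overfullness counting in the style of Lemma~\ref{thrm:pendant}, which already forces $k\ge q^2+2$ and therefore excludes the pathological small cliques from consideration.
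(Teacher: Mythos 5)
Your proof is correct and follows essentially the same route as the paper: both directions rest on the definition of neighborhood-overfull, on Lemma~\ref{thrm:pendant} to rule out pendant neighbours of the overfull witness, and on Theorem~\ref{caracterizacao_split2adm} (via Observation~\ref{obs:pendant_vertex}) to produce a universal vertex of $G[V\setminus P]$. Your extra counting steps ($k\ge q^2+2$ and $q\le|Y\setminus P|\le q$), which identify the $\Delta(G)$-witness with that universal vertex, only spell out a bridge the paper's own terse necessity argument leaves implicit, so no genuinely different method is involved.
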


\begin{proof}
    Suppose initially that, $\exists{v}\in{X}$ universal in $G[V\setminus{P}]$ such that $\Delta(G[N[v]])=\Delta(G)$ and $G[N[v]]$ is overfull. This is precisely the definition of neighborhood-overfull and this proves the sufficiency of the theorem. It remains to show that if $G$ is neighborhood-overfull, then $\exists{v}\in{X}$ universal in $G[V\setminus{P}]$ such that $\Delta(G[N[v]])=\Delta(G)$ and $G[N[v]]$ is overfull. If $G$ is neighborhood-overfull, then $\exists{x}\in{X}$, such that $d(x)=\Delta(G)$ and $G[N[x]]$ is overfull. As $G$ is a $(\sigma=2)$-split graph, we know that $\exists{v}\in{X}$ which is universal in $G[V\setminus{P}]$. Thus, by Lemma~\ref{thrm:pendant}, $v$ does not have any pendant neighbors.
\end{proof}

\vspace{0.5em}

A color $c$ is \emph{incident} to a vertex $v$ if $v$ or one of the incident edges to $v$ is colored with $c$. We denote by $\mathcal{C}(G)$ the set of colors used to color the elements (vertices or edges) of the graph $G$, $C[v]$ the set of colors incident to the vertex $v$ (it includes the color attributed to the vertex $v$), and $C(v)$ the set of colors used on the edges of $v$. We say that $j$ is a \emph{new color} if $j \notin \mathcal{C}(G)$. On the other hand, we say that a color $c$ is a \emph{missing color} of the vertex $v$ if $c \in \mathcal{C}(G) \setminus C[v]$. 

\begin{observation}
    Let $G$ be a graph and $u$ a pendant vertex of $G$. The existence of a missing color of $v$ is sufficient to color the pendant edge $uv$.
    \label{obs:pendant_edge}
\end{observation}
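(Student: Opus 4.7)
The plan is to unpack the definitions and exploit the fact that a pendant vertex imposes almost no constraints on the colors around it. First, I would note that since $u$ is pendant, $uv$ is the unique edge incident to $u$, so the proper coloring constraints on $uv$ reduce to requiring that the color assigned to $uv$ differ from every color used on edges incident to $v$ (to avoid edge-edge conflicts), and, in the total coloring setting, also differ from the colors of $v$ and of $u$ (to avoid vertex-edge conflicts).

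Next, I would invoke the definition of a missing color directly: a color $c \in \mathcal{C}(G) \setminus C[v]$ is, by construction, different from the color of $v$ and from every color appearing on an edge incident to $v$, because $C[v]$ collects precisely these. Therefore, assigning $c$ to $uv$ is automatically consistent with every constraint coming from the $v$-side of the edge.

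Finally, I would argue that on the $u$-side nothing can go wrong: the only edge incident to $u$ is $uv$ itself, so no edge-edge conflict can occur at $u$, and in the total coloring case the color of $u$ can simply be chosen (or reassigned) to differ from $c$, which is possible since $u$ has only one incident edge and thus wide freedom. The whole argument therefore collapses to reading the definition of $C[v]$ and noticing that a pendant vertex contributes no extra constraint. There is essentially no technical obstacle; the one point requiring care is the uniform treatment of edge-coloring and total-coloring, which is automatic because the notation $C[v]$ already encodes both the color of $v$ and the colors of its incident edges.
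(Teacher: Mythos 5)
Your argument is correct and is exactly the definitional unpacking the paper relies on: the paper states this observation without proof, treating it as immediate from the definition of $C[v]$ and the fact that a pendant vertex $u$ imposes no edge-edge constraint of its own. Your extra remark about choosing the color of $u$ in the total-coloring setting matches what the paper's total-color procedure does (it colors the pendant vertex with a color from $C(x)$, which is disjoint from the missing color used on the pendant edge), so there is no discrepancy.
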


\begin{theorem}
    Let $G=((X,Y),E)$ be a $(\sigma=2)$-split graph with even maximum degree. Then $G$ is Class~$2$ if and only if $G$ is neighborhood-overfull. 
    \label{thm:edge_class1}
\end{theorem}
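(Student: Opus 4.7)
The backward direction follows immediately from Observation~\ref{obs:class2}: a neighborhood-overfull graph is in particular subgraph-overfull, hence Class~$2$. For the forward direction I plan to prove the contrapositive, exhibiting a proper $\Delta(G)$-edge-coloring whenever $G$ is not neighborhood-overfull. The strategy is a two-phase construction: first produce a proper edge-coloring of $G\setminus P$ using at most $\Delta(G)$ colors by invoking the existing structural results on graphs with a universal vertex, and then extend it to the pendant edges via Observation~\ref{obs:pendant_edge}.

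Since $G$ is a $(\sigma=2)$-split graph, Observation~\ref{obs:pendant_vertex} together with Theorem~\ref{caracterizacao_split2adm} provides a vertex $v\in X$ that is universal in $G\setminus P$, so $\Delta(G\setminus P)=|V(G\setminus P)|-1$. I split the analysis on the parity of $|V(G\setminus P)|$. If it is even, Theorem~\ref{thm:Behzad} makes $G\setminus P$ Class~$1$. If it is odd, Theorem~\ref{thm:Plantholt} reduces Class~$2$-ness to subgraph-overfullness, and when $G\setminus P$ is not subgraph-overfull it is again Class~$1$. In both situations $G\setminus P$ can be properly edge-colored with $\Delta(G\setminus P)\leq\Delta(G)$ colors.

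The delicate subcase is $|V(G\setminus P)|$ odd together with $G\setminus P$ subgraph-overfull. Theorem~\ref{thm:eq_split} then makes $G\setminus P$ neighborhood-overfull, and any witness of this must have degree $|V(G\setminus P)|-1$ and thus also be universal in $G\setminus P$; its closed neighborhood in $G\setminus P$ is the whole $G\setminus P$, so $G\setminus P$ is overfull as a whole. If in addition $\Delta(G)=\Delta(G\setminus P)$, any such universal witness $w$ cannot have a pendant neighbor in $G$, for that would push $d_G(w)$ above $\Delta(G)$; hence $G[N[w]]=G\setminus P$ is overfull with $\Delta(G[N[w]])=\Delta(G)$, and Theorem~\ref{thm:overfull} forces $G$ to be neighborhood-overfull, contradicting our hypothesis. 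Otherwise $\Delta(G)\geq\Delta(G\setminus P)+1$, and Vizing's theorem lets me color $G\setminus P$ with at most $\Delta(G\setminus P)+1\leq\Delta(G)$ colors.

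In every surviving case $G\setminus P$ has been properly edge-colored using at most $\Delta(G)$ colors, and I extend to the pendants as follows: each $u\in X$ with $p_u\geq 1$ pendant neighbors uses $d_{G\setminus P}(u)$ colors at $u$ in the partial coloring, leaving at least $\Delta(G)-d_{G\setminus P}(u)\geq d_G(u)-d_{G\setminus P}(u)=p_u$ colors missing at $u$. I assign each pendant edge at $u$ a distinct missing color; no conflict arises at the pendant endpoint because it has degree~$1$, exactly matching Observation~\ref{obs:pendant_edge}. The main obstacle is the contradiction step above, where the universality provided by Theorem~\ref{caracterizacao_split2adm} must be combined with the overfull structure supplied by Theorem~\ref{thm:eq_split} and then lifted from $G\setminus P$ up to $G$ through Theorem~\ref{thm:overfull}.
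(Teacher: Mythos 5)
Your proposal is correct and takes essentially the same route as the paper: the reverse direction via Observation~\ref{obs:class2}, and the forward direction by contrapositive, coloring $H=G[V\setminus P]$ with at most $\Delta(G)$ colors via Theorems~\ref{thm:Behzad} and~\ref{thm:Plantholt} (using the universal vertex guaranteed by Theorem~\ref{caracterizacao_split2adm}) and then extending to the pendant edges with missing colors as in Observation~\ref{obs:pendant_edge}. The differences are organizational rather than substantive: you split cases by the parity of $|V(H)|$ and dispose of the overfull subcase by a contradiction through Theorem~\ref{thm:overfull} (or Vizing when $\Delta(G)>\Delta(H)$), where the paper branches directly on the dichotomy supplied by Theorem~\ref{thm:overfull}, and your count of missing colors at each clique vertex is a more explicit version of the paper's argument.
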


\begin{proof}
    Suppose, initially, that $G$ is neighborhood-overfull. Thus, by Observation~\ref{thm:eq_split}, $G$ is subgraph-overfull, and consequently Class~2, by Observation~\ref{obs:class2}. Next, suppose by contrapositive that $G$ is not neighborhood-overfull. Then, by Theorem~\ref{thm:overfull}, $\forall{v}\in{X}$ universal in $H=G[V\setminus{P}]$, $\Delta(H)~\neq~\Delta(G)$ or $H$ is not overfull.
    The first case we deal is when $\Delta(H)~\neq~\Delta(G)$ and $H$ is overfull. Therefore, $H$ is Class~2, and since $\Delta(H) + 1$ colors are used to color the edges of $H$, for each vertex $x \in X$, there is at least one missing color in $L(x)$, i.e., the list of missing colors of vertex $x$. In order to finish the edge coloring of $G$, it remains to add all pendant vertices to $H$ and color each added pendant edge. Let $v \in X$ be a vertex such that $d(v)=\Delta(G)$ and suppose $d_G(v) - d_H(v) = i$. Since $|L(v)|\geq 1$, it is assured that we use at most $i-1$ new colors to finish such a coloring, and thus, $G$ is Class~1.
    Now, suppose $H$ is not overfull. By Theorems~\ref{thm:Plantholt} and~\ref{thm:Behzad}, $H$ is Class~1. If $\Delta(H)<\Delta(G)$ for each added pendant edge is assigned a new color, and $G$ is Class~1. Otherwise, each $\Delta(G)$-vertex does not have a pendant neighbor. Let $x \in X$ be a vertex such that $\Delta(G)-d(x) > 0$. Note that $|L(x)|= \Delta(G)-d(x)$ and since the existence of a missing color is sufficient to color a pendant edge, by Observation~\ref{obs:pendant_edge}, $G$ is Class~1.
 \end{proof}   

\vspace{0.5em}
Next, we propose an algorithm that colors the edges of any Class~$1$ $(\sigma=2)$-split graph $G$. Let $P$ the subset of pendant vertices of $G$. We first color the edges of the graph $H=G[V \setminus P]$. Then, we add back the pendant vertices to the graph $H$ and perform an edge coloring for every edge incident to the vertices of $P$.\\

\begin{algorithm}[H]
    \label{alg:edge_coloring}
    \caption{Classification of $(\sigma=2$)-split graphs}
    \SetAlgoLined
    \KwData{A $(\sigma=2)$-split graph $G=((X,Y),E)$ and $H=G[V\setminus{P}]$, where $p \in P$ iff $d(p)=1$.}
    \KwResult{A classification for $G$ and a $\Delta$-edge coloring, if $G$ is Class~1.}
    \If{$|E(H)|>\Delta(H)\cdot\floor*{\frac{|V(H)|}{2}}$}{
        \If{$\Delta(H)=\Delta(G)$}{
            \Return {$G$ is Class~2}\;
        }
        \Else{
            \Return {$G$ is Class~1}\;
            Obtain $H_c$ by coloring the edges of $H$ using $\Delta(H)+1$ colors\;
            edge-color($G,H_c$)\;
            \Return{A $\Delta$-edge coloring of $G$}\;
        }
    }
    \Else{
        \Return{$G$ is Class~1}\;
        \If{$\Delta(H)$ is odd}{
            Obtain $H_c$ by coloring the edges of $H$ using Behzad's algorithm\;
        }
        \Else{
            Obtain $H_c$ by coloring the edges of $H$ using Plantholt's algorithm\;
        }
        edge-color($G,H_c$)\;
        \Return{A $\Delta$-edge coloring of $G$}\;
    }   
\end{algorithm}

\begin{procedure}[H]
    \caption{edge-color($G,H$) \label{proc:edge_coloring}}
    \SetAlgoLined
    \KwData{A Class~1 $(\sigma=2)$-split graph $G=((X,Y),E)$ and an edge-colored subgraph $H=G[V\setminus{P}]$, where $p \in P$ iff $d(p)=1$.}
    \ForAll{$v \in V(H)$}{
        $L(v)=\left(\mathcal{C}(H)\setminus C(v)\right)=\{l_1,l_2,...,l_k\}$
    }
    \ForAll{$v \in P$}{
        Add $vx$ to $H$, s.t. $N_G(v)=\{x\}$\;
        \If{$L(x) \neq \emptyset$}{
            $C(v)=\{l_i\}$, $i \in \{1,..,k\}$
            $L(x)=L(x)\setminus \{l_i\}$
        }
        \Else{
            $C(v)=\{j\}$, s.t. $j$ is a new color\;
            \ForAll{$u \in V\setminus \{v,x\}$}{
                $L(u)=L(u)\cup \{j\}$\;
            }
        }
    }
\end{procedure}


Figure~\ref{fig:passoapasso} depicts an execution of Algorithm~\ref{alg:edge_coloring}, considering a specific split graph.

\begin{figure}[H]
  \centering
  \subfloat[]{\includegraphics[width=.23\textwidth]{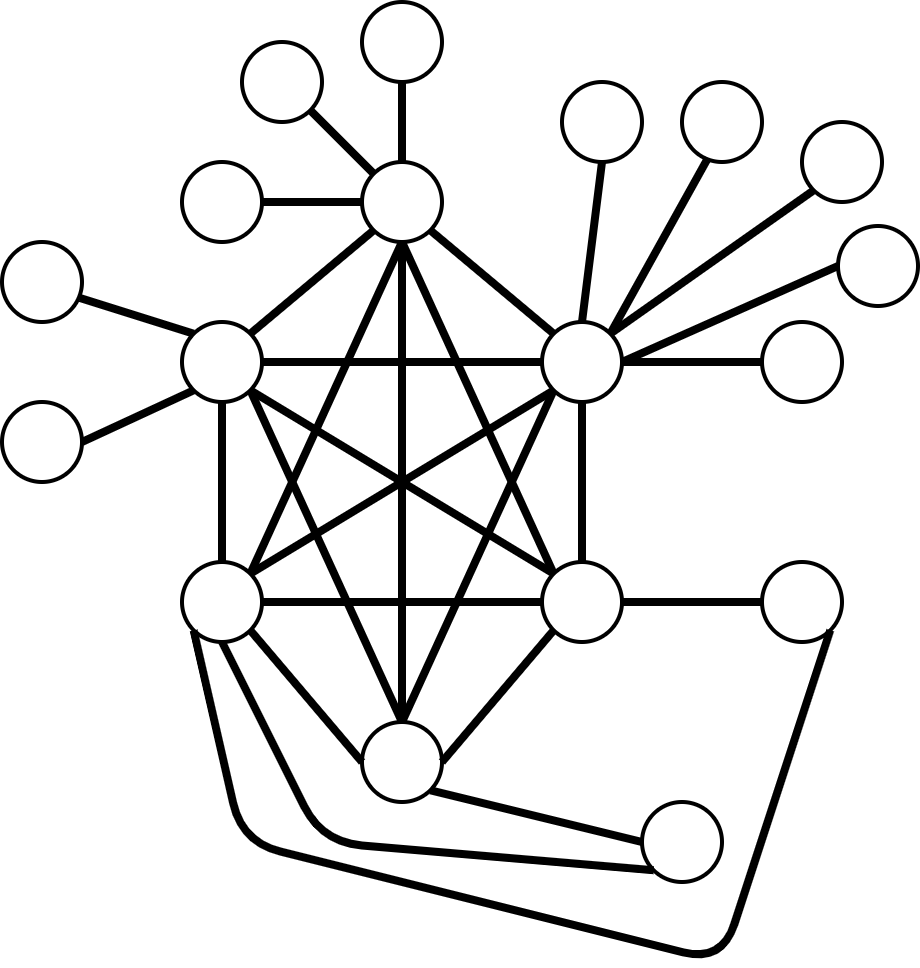}}\hspace{1.5em}%
  \subfloat[]{\includegraphics[width=.18\textwidth]{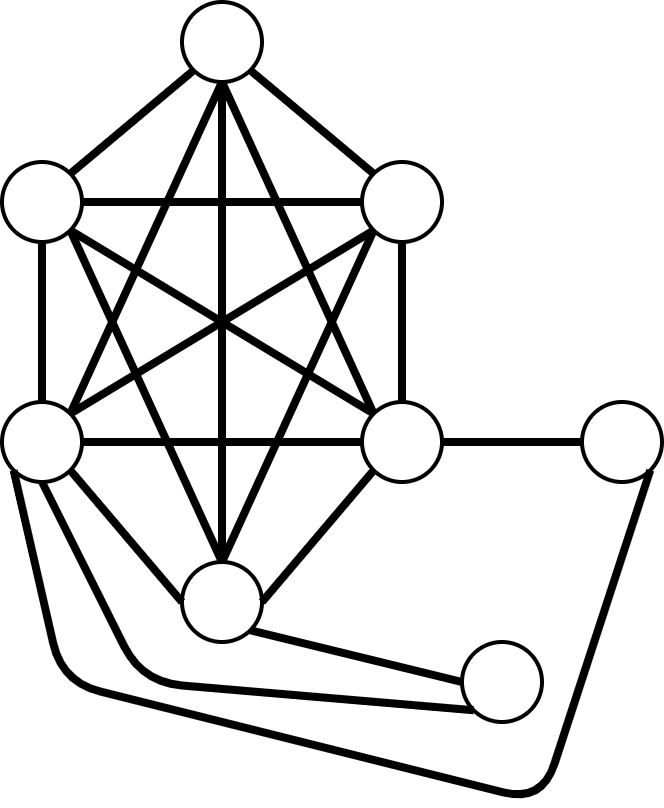}}\hspace{1.5em}%
  \subfloat[]{\includegraphics[width=.18\textwidth]{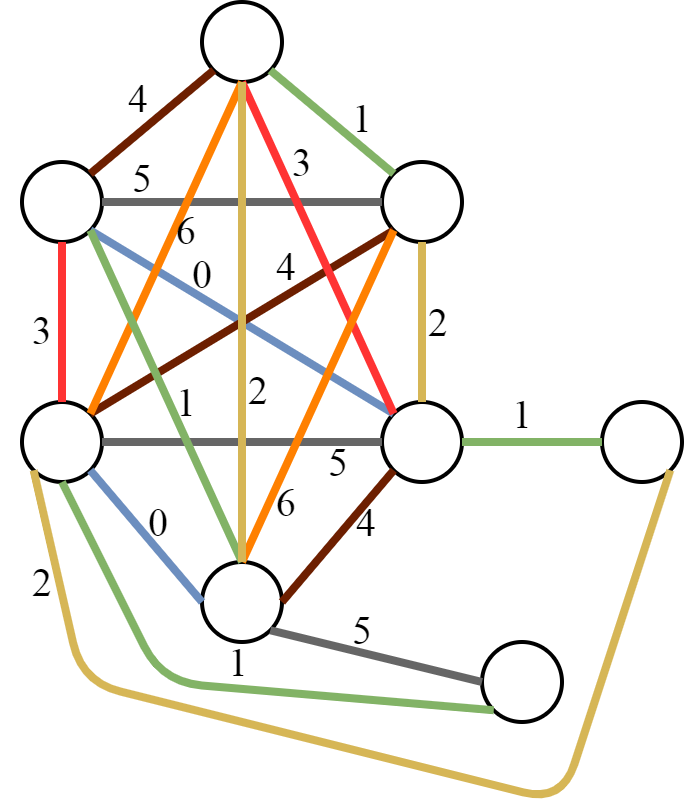}}\\
  \subfloat[]{\includegraphics[width=.25\textwidth]{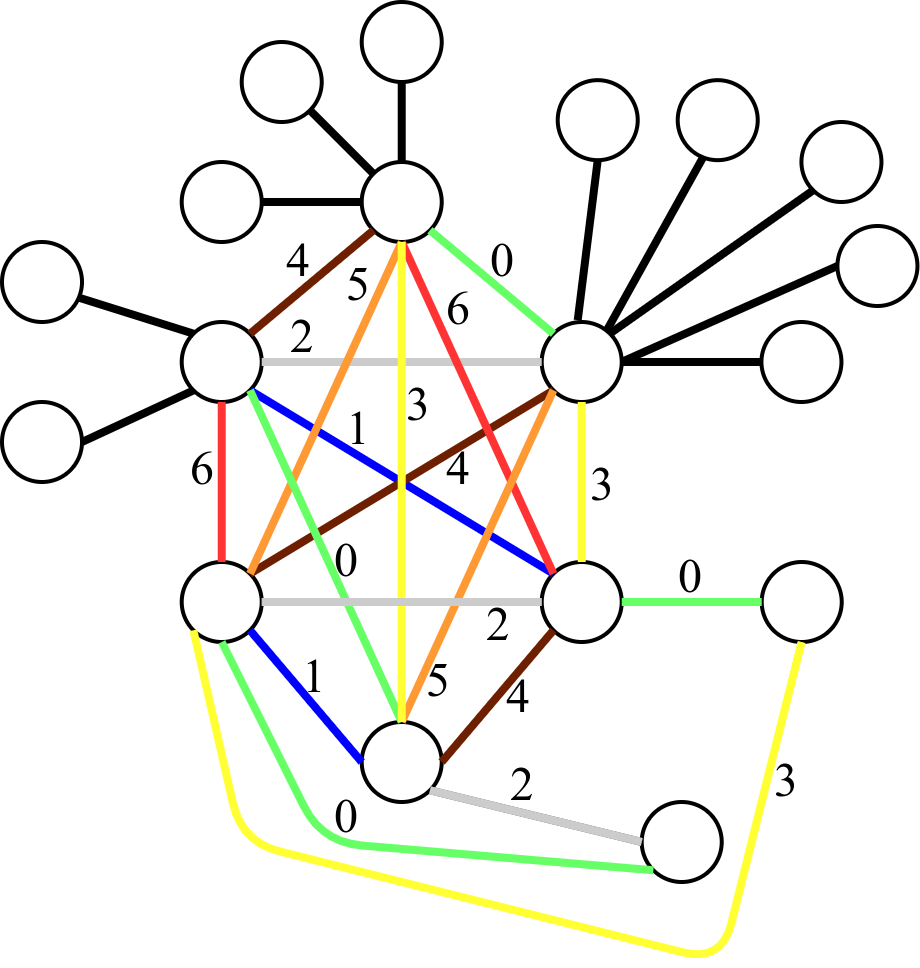}}\hspace{1.5em}%
  \subfloat[]{\includegraphics[width=.25\textwidth]{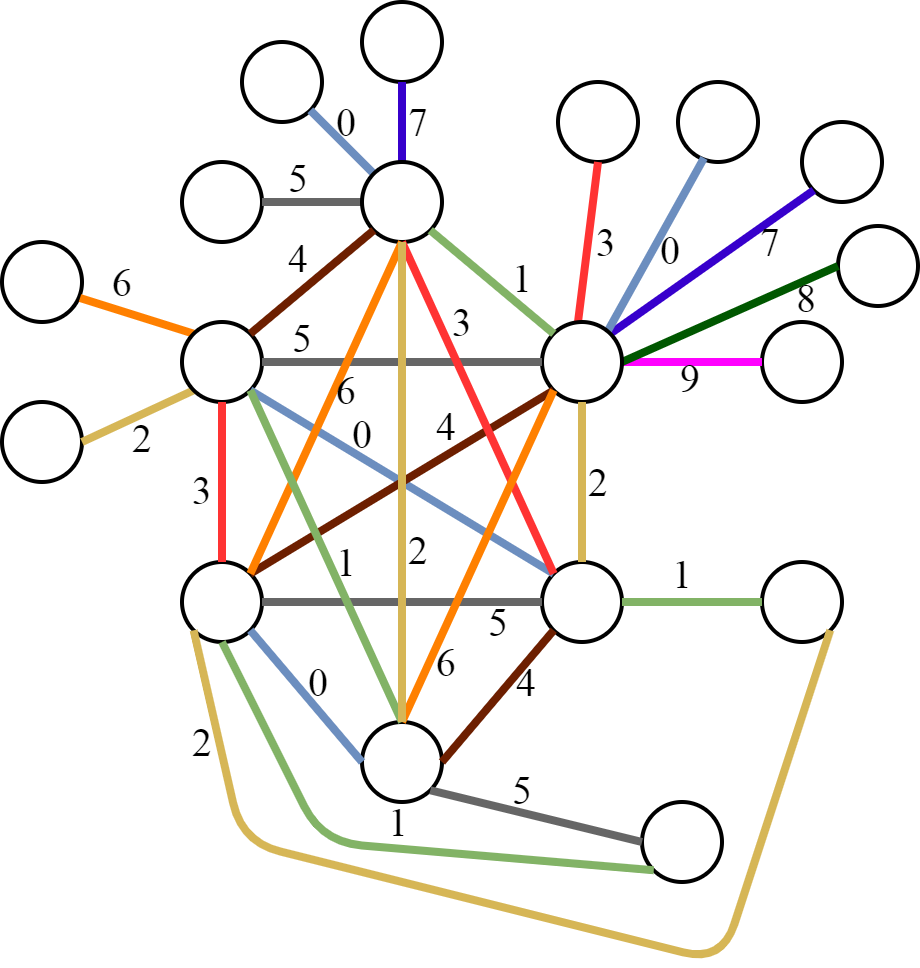}}\hspace{1.5em}%
  \caption{(a) Split Graph $G$ such that $\sigma(G)=2$. (b) Subgraph $H=G[V\setminus P]$. (c) Edge Coloring of $H$. (d) Pendant edges missing coloring. (e) Edge Coloring of $G$.}
  \label{fig:passoapasso}
\end{figure}

\begin{theorem}
Algorithm~\ref{alg:edge_coloring} is correct and runs in polynomial time.\newline
\label{thm:correction_edge} 
\end{theorem}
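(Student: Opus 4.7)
The plan is to verify Algorithm~\ref{alg:edge_coloring} branch by branch against the structural results of this section, and then to observe that every subroutine runs in polynomial time. Throughout I set $H = G[V\setminus P]$; since $G$ is a $(\sigma=2)$-split graph, Theorem~\ref{caracterizacao_split2adm} supplies a vertex $v \in X$ that is universal in $H$. Combined with Lemma~\ref{thrm:pendant}, this forces $G[N[v]] = H$ whenever the closed neighborhood of a $\Delta(G)$-vertex is overfull, which is exactly the geometric fact that links the algorithm's overfull test on $H$ to neighborhood-overfullness of $G$.

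I would begin with the overfull branch, i.e., $|E(H)| > \Delta(H)\lfloor |V(H)|/2\rfloor$. By Theorem~\ref{thm:overfull}, the additional condition $\Delta(H) = \Delta(G)$ is precisely the requirement for $G$ to be neighborhood-overfull, and Theorem~\ref{thm:edge_class1} then confirms that $G$ is Class~2, matching the algorithm's output. In the companion subcase $\Delta(H) < \Delta(G)$, Theorem~\ref{thm:edge_class1} places $G$ in Class~1; a $(\Delta(H)+1)$-edge coloring of $H$ (obtained in polynomial time by Vizing's algorithm) leaves each vertex of $X$ with at least one missing color, and the procedure edge-color extends this coloring to a $\Delta(G)$-edge coloring of $G$ exactly as in the proof of Theorem~\ref{thm:edge_class1}, reusing missing colors for pendant edges whenever possible.

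For the non-overfull branch, universality of $v$ in $H$ yields $N_H[v] = V(H)$, so $H[N_H[v]] = H$; Theorem~\ref{thm:eq_split} then converts the hypothesis that $H$ is not overfull into the stronger statement that $H$ is neither subgraph-overfull nor neighborhood-overfull. Depending on the parity of $\Delta(H)$, Theorem~\ref{thm:Behzad} (when $\Delta(H)$ is odd, so $|V(H)|$ is even) or Theorem~\ref{thm:Plantholt} (when $\Delta(H)$ is even, so $|V(H)|$ is odd) produces a $\Delta(H)$-edge coloring of $H$. The procedure edge-color then extends this coloring to $G$ as in Theorem~\ref{thm:edge_class1}: pendant edges consume missing colors when available, and at most $\Delta(G) - \Delta(H)$ genuinely new colors are introduced, so the final palette has size at most $\Delta(G)$.

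The polynomial running time follows immediately from the ingredients: the overfull test and the construction of $H$ cost $O(|V|+|E|)$; Vizing's, Plantholt's, and Behzad's algorithms are known to be polynomial; and edge-color visits each pendant vertex once while maintaining missing-color lists of size $O(|V|)$, giving a global overhead of $O(|V|^2)$. The main delicacy I foresee is the Plantholt subcase with odd $|V(H)|$, where Class~1 of $H$ hinges on the absence of any subgraph-overfull witness; the observation that unlocks it is that, since $v$ is universal in $H$, every $\Delta(H)$-vertex of $H$ must itself be universal, so any neighborhood-overfull witness must be $H$ itself, and plain overfullness of $H$---the exact quantity checked by the first conditional---already controls the classification.
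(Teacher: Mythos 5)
Your proposal is correct and takes essentially the same route as the paper: the paper's proof simply cites Theorems~\ref{thm:Plantholt}, \ref{thm:Behzad}, \ref{thm:overfull} and \ref{thm:edge_class1} together with the $\mathcal{O}(n^2)$ cost of the edge-color procedure, and your branch-by-branch verification is just a fleshed-out version of that argument (your closing observation that universality of $v$ in $H$ forces any neighborhood-overfull witness to be $H$ itself is a nice explicit treatment of the Plantholt subcase, which the paper leaves implicit in Theorem~\ref{thm:edge_class1}).
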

\vspace{-2em}
\begin{proof}
    The correction of Algorithm~\ref{alg:edge_coloring} follows from Theorems~\ref{thm:Plantholt}, \ref{thm:Behzad}, \ref{thm:overfull} and \ref{thm:edge_class1}. Note that the time complexity of Algorithm~\ref{alg:edge_coloring} depends on the complexity of the algorithm used to color the subgraph $H$. Moreover, Procedure~\ref{proc:edge_coloring} runs in $\mathcal{O}(n^2)$.
\end{proof}
\newline

Theorem~\ref{thm:edge_class1} and Theorem~\ref{thm:eq_split} corroborate two edge-coloring famous conjectures considering $(\sigma=2)$-split graphs.

The first, and best known, is the Hilton Conjecture~\cite{ConjecturaHilton}. 

\begin{conjecture}\emph{\cite{ConjecturaHilton}}
   Let $G$ be a graph with $\Delta(G)>\frac{|V|}{3}$. $G$ is Class $1$ if and only if $G$ is not subgraph-overfull.
    \label{con:overfull}
\end{conjecture}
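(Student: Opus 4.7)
The plan is to establish Conjecture~\ref{con:overfull} in the special case where $G$ is a $(\sigma=2)$-split graph, which is the ``corroboration'' the preceding paragraph advertises. The entire argument should be a short synthesis of Theorem~\ref{thm:edge_class1} together with Theorem~\ref{thm:eq_split} and Chen's theorem, stratified by the parity of $\Delta(G)$; I expect the hypothesis $\Delta(G)>|V|/3$ to play no role at all, so the statement will actually hold unconditionally inside this subclass.

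First I would split on the parity of $\Delta(G)$. In the case $\Delta(G)$ even, Theorem~\ref{thm:edge_class1} gives that $G$ is Class~2 if and only if $G$ is neighborhood-overfull, and Theorem~\ref{thm:eq_split} upgrades neighborhood-overfullness to subgraph-overfullness for split graphs. Combining the two yields $G$ is Class~1 $\Longleftrightarrow$ $G$ is not subgraph-overfull, which is precisely the conjectured equivalence in this case.

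In the case $\Delta(G)$ odd, Chen's theorem (the last boxed theorem before Figure~\ref{diagramaCA}) already gives that $G$ is Class~1, so one direction is immediate; it remains to exhibit that such a $G$ cannot be subgraph-overfull. By Theorem~\ref{thm:eq_split} it suffices to show $G$ is not neighborhood-overfull, and here the key short observation is that every overfull graph has odd order. Indeed, if $n$ were even then the handshake bound $|E|\le n\Delta/2=\Delta\floor*{n/2}$ would contradict the overfull inequality $|E|>\Delta\floor*{n/2}$. Applied to a hypothetical neighborhood-overfull witness $G[N[v]]$ with $d(v)=\Delta(G)$, the order $\Delta(G)+1$ must then be odd, forcing $\Delta(G)$ to be even, contrary to our case assumption.

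I do not anticipate any real obstacle: the argument is essentially bookkeeping, and the only micro-step not directly quoted from an earlier result is the handshake-lemma observation that overfull graphs have odd order. I would close by remarking that the hypothesis $\Delta(G)>|V(G)|/3$ was never invoked, so in fact the Overfull Conjecture holds in a stronger, hypothesis-free form for the whole class of $(\sigma=2)$-split graphs, which is exactly the sense in which Theorem~\ref{thm:edge_class1} and Theorem~\ref{thm:eq_split} corroborate Conjecture~\ref{con:overfull}.
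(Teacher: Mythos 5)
The statement you were asked about is not something the paper proves, nor could it: it is Hilton's Overfull Conjecture, quoted verbatim from the literature, and it remains open in general. What the paper actually establishes is Corollary~\ref{cor:Hilton} -- that $(\sigma=2)$-split graphs \emph{satisfy} the conjecture -- and your proposal, quite reasonably, proves exactly that restricted statement rather than the conjecture itself. Within that reading, your argument is correct. For even $\Delta(G)$ you combine Theorem~\ref{thm:edge_class1} with Theorem~\ref{thm:eq_split} exactly as the paper intends; for odd $\Delta(G)$ you invoke Chen's theorem for the ``Class~1'' half and then close the ``not subgraph-overfull'' half with the parity observation that an overfull graph must have odd order (if $|V(H)|$ were even, the handshake bound $|E(H)|\le \Delta(H)\lfloor |V(H)|/2\rfloor$ would contradict overfullness), applied to the would-be witness $G[N[v]]$ of order $\Delta(G)+1$.

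The comparison with the paper is worth noting: the paper justifies Corollary~\ref{cor:Hilton} with a single sentence appealing to Theorem~\ref{thm:edge_class1}, but that theorem is stated only for even maximum degree, so the odd-$\Delta$ case of the corollary is left implicit there. Your treatment fills that gap explicitly, and the parity argument is the one genuinely new micro-step; it is sound. Your closing remark that the hypothesis $\Delta(G)>|V|/3$ is never used is also accurate and matches the paper's own observation. The only caveat to record is the framing one: you have verified the conjecture on a subclass, not proved the conjecture, and your write-up should say so as plainly as you do in your opening sentence.
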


Note that the characterization provided by Theorem~\ref{thm:edge_class1} holds for any $(\sigma=2)$-split graph, and thus it also holds for those with $\Delta>\frac{|V|}{3}$ as stated by Hilton.

Corollary~\ref{cor:Hilton} follows from Theorem~\ref{thm:edge_class1}.

\begin{corollary}
    The class of $(\sigma=2)$-split graphs satisfies Conjecture~\ref{con:overfull}.
    \label{cor:Hilton}
\end{corollary}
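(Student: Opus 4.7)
The statement to verify is that every $(\sigma=2)$-split graph $G$ with $\Delta(G)>|V(G)|/3$ is Class~$1$ if and only if $G$ is not subgraph-overfull. The plan is to separate the biconditional into its two directions. The ``only if'' direction is immediate for every graph, being the contrapositive of Observation~\ref{obs:class2}: if $G$ were subgraph-overfull it would already be Class~$2$. So the substance of the proof lies in the converse, which I would establish for the whole class of $(\sigma=2)$-split graphs without invoking the hypothesis $\Delta(G)>|V(G)|/3$ at all.

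For the converse, I would split on the parity of $\Delta(G)$. If $\Delta(G)$ is odd, the result of Chen~\cite{Chen} cited just above Lemma~\ref{thrm:pendant} says $G$ is Class~$1$ outright, regardless of any overfullness condition. If $\Delta(G)$ is even, I would simply concatenate Theorem~\ref{thm:edge_class1} with Theorem~\ref{thm:eq_split}: the former equates being Class~$2$ with being neighborhood-overfull for $(\sigma=2)$-split graphs of even maximum degree, while the latter equates neighborhood-overfull with subgraph-overfull inside the split class. Composing and contraposing these two equivalences yields that $G$ not subgraph-overfull implies $G$ is Class~$1$, which is exactly what is needed.

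A small remark worth making is that, as just noted, nowhere in the argument is the inequality $\Delta(G)>|V(G)|/3$ actually used; the conclusion obtained is therefore strictly stronger than Hilton's conjecture restricted to this class, and it would be natural to advertise that alongside the corollary. I do not anticipate a genuine obstacle here, since all the technical work has already been absorbed into Theorems~\ref{thm:edge_class1} and \ref{thm:eq_split}. The only real point of care is to handle the odd-maximum-degree case separately, because Theorem~\ref{thm:edge_class1} explicitly assumes an even maximum degree and would not cover that case on its own.
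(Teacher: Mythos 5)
Your proposal is correct and follows essentially the same route as the paper: the corollary is obtained by combining Theorem~\ref{thm:edge_class1} with Theorem~\ref{thm:eq_split} (and Observation~\ref{obs:class2} for the forward direction), noting that the characterization holds for all $(\sigma=2)$-split graphs so the hypothesis $\Delta(G)>|V|/3$ is not needed. Your explicit treatment of the odd-$\Delta$ case via Chen's theorem is a point the paper leaves implicit (it restricts attention to even maximum degree because odd-degree split graphs are already classified), but it is the same argument in substance.
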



Figueiredo et. al, stated the following conjecture concerning chordal graphs, i.e., graphs without induced cycles of length at least $4$.

\begin{conjecture}\emph{\cite{Figueiredo1995LocalCF}}
    Let $G$ be a chordal graph. Then $G$ is Class~$2$ if, and only if, $G$ is subgraph-overfull.
    \label{con:Celina}
\end{conjecture}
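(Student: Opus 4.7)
The plan is to establish the two directions separately. The forward direction, that a subgraph-overfull graph is Class~2, holds for every graph and follows immediately from Observation~\ref{obs:class2}; no chordality is needed. So the entire content of the conjecture lies in the reverse direction: a chordal graph $G$ that is not subgraph-overfull must be Class~1. I would attack this by induction on $|V(G)|$, using the perfect elimination ordering (PEO) that every chordal graph admits.

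More precisely, let $v_1,v_2,\ldots,v_n$ be a PEO of $G$ and let $v=v_1$ be simplicial, so $N(v)$ induces a clique. Set $H=G-v$, which is again chordal. I would split into three cases according to how $H$ compares with $G$: (a) $\Delta(H)=\Delta(G)$ and $H$ is not subgraph-overfull; (b) $\Delta(H)<\Delta(G)$; (c) $\Delta(H)=\Delta(G)$ and $H$ is subgraph-overfull (which is possible even when $G$ itself is not). In case~(a), the induction hypothesis yields a $\Delta(G)$-edge-coloring of $H$ and one extends it across the at most $\Delta(G)$ edges incident to~$v$ by a Vizing-fan / Kempe-chain argument tailored to the simplicial neighbourhood $N(v)$, much in the spirit of Procedure~\ref{proc:edge_coloring}. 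In case~(b) one has $d_G(v)=\Delta(G)$, and the strategy is to use the freedom of missing colors at vertices of $N(v)$ in $H$ (as in Theorem~\ref{thm:edge_class1}), borrowing recolorings of the Chen--Fu--Ko type already used for split graphs. Case~(c) is the most delicate: since $G$ is not subgraph-overfull while $H$ is, the overfull subgraph of $H$ must be ``destroyed'' by the edges incident to~$v$, and I would try to locate a specific overfull induced subgraph $H'\subseteq H$ whose union with $v$ and its edges in $G$ permits a color swap returning us to case~(a) or~(b).

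The hard part will be case~(c) together with the global nature of the subgraph-overfull condition. An overfull induced subgraph can be anchored at any bag of the clique tree of $G$, and removing a simplicial vertex may either destroy such a subgraph or create a fresh one, so coordinating the inductive edge-coloring with these changes is precisely the difficulty that has kept this conjecture open in general. I would therefore expect to need an auxiliary structural lemma, analogous to Theorem~\ref{thm:overfull}, characterizing \emph{where} in the clique tree an overfull induced subgraph of a chordal graph can live (for instance, showing it must be anchored at a vertex universal in its maximal clique and all its pendant neighbors can be ignored, by the analogue of Observation~\ref{obs:pendant_vertex} for edge-coloring). Such a lemma would let us strengthen the inductive hypothesis enough to control case~(c) uniformly.

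As a sanity check I would verify the strategy on the regimes where the conjecture is already settled: $(\sigma=2)$-split graphs via Theorem~\ref{thm:edge_class1}, indifference graphs~\cite{IndiferencaTCC}, split-interval and split-comparability graphs (\cite{ComparabilityOrtiz},\cite{Gonzaga}), and chordal graphs with $\Delta(G)>|V(G)|/3$ (which would also fall to Conjecture~\ref{con:overfull} of Hilton). If the PEO-based induction specializes correctly to each of these families, that is evidence that the scheme can be pushed to the full class; if it fails, the failure point should identify precisely which structural feature of chordal graphs beyond the PEO the argument is still missing.
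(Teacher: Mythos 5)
There is a fundamental mismatch here: the statement you are asked about is a \emph{conjecture} (due to Figueiredo et al.~\cite{Figueiredo1995LocalCF}), and the paper does not prove it --- it remains open. The paper only cites it and then verifies the special case of $(\sigma=2)$-split graphs, which follows from Theorem~\ref{thm:edge_class1} combined with Theorem~\ref{thm:eq_split} (the equivalence of subgraph-overfull and neighborhood-overfull for split graphs). So there is no ``paper's own proof'' to compare against, and any complete proof you produced would be a new result well beyond the scope of this paper.

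Your proposal is a research plan rather than a proof, and it contains genuine gaps that you yourself flag. The decisive one is not only case~(c): even case~(a) is unestablished. Extending a $\Delta(G)$-edge-coloring of $H=G-v$ across the edges at a simplicial vertex $v$ requires, in effect, a system of distinct representatives among the missing colors at the vertices of the clique $N(v)$, and the Vizing-fan/Kempe-chain machinery only guarantees this with $\Delta+1$ colors; keeping the extension within $\Delta$ colors is exactly the obstruction that makes the \textsc{classification problem} hard, and simpliciality of $v$ alone does not remove it. (Note that the paper's Procedure~\ref{proc:edge_coloring}, which you invoke as a model, only handles \emph{pendant} vertices, where a single missing color at the unique neighbor suffices by Observation~\ref{obs:pendant_edge} --- a far weaker demand than coloring $d_G(v)$ edges into a clique neighborhood.) Case~(c) you explicitly leave open, and the auxiliary structural lemma you postulate (localizing overfull subgraphs at bags of the clique tree) is itself conjectural. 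The sanity checks on known subclasses are sensible, but they do not close any of these gaps. In short: the forward direction (Observation~\ref{obs:class2}) is correct and trivial, and the reverse direction is not proved.
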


Since split graphs are chordal graphs and, by Theorem~\ref{thm:eq_split}, being subgraph-overfull and neighborhood-overfull are equivalent concepts when restricted to split graphs, $(\sigma=2)$-split graphs also satisfy Conjecture~\ref{con:Celina}.

\begin{corollary}
    The class of $(\sigma=2)$-split graphs satisfies Conjecture~\ref{con:Celina}.
\end{corollary}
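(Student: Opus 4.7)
The plan is to unfold Conjecture~\ref{con:Celina} as a two-way implication for a $(\sigma=2)$-split graph $G$, and verify each direction separately using the results already established in this section.

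For the easy direction, if $G$ is subgraph-overfull, then $G$ is Class~$2$ by Observation~\ref{obs:class2}, with no restriction on $\sigma$ or chordality needed. So the work lies entirely in the converse: assuming $G$ is Class~$2$, I must produce a subgraph $H \subseteq G$ with $\Delta(H)=\Delta(G)$ and $|E(H)| > \Delta(H)\cdot\lfloor |V(H)|/2 \rfloor$.

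To handle the converse, the first step is to rule out odd maximum degree: by the Chen theorem quoted in the excerpt (``split graph with odd maximum degree $\Rightarrow$ Class~$1$''), the hypothesis $G$ Class~$2$ forces $\Delta(G)$ to be even. This puts us exactly in the regime of Theorem~\ref{thm:edge_class1}, which characterizes Class~$2$ for $(\sigma=2)$-split graphs with even maximum degree: $G$ must be neighborhood-overfull. Finally, Theorem~\ref{thm:eq_split} tells us that for split graphs the notions of neighborhood-overfull and subgraph-overfull coincide, so $G$ is subgraph-overfull, as required. Chaining these three cited results gives the proof.

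I do not expect any real obstacle here; the corollary is essentially a bookkeeping consequence of Theorems~\ref{thm:eq_split}, \ref{thm:edge_class1} and the parity reduction via Chen's theorem, exactly as the paragraph preceding the statement already hints. The only care needed is to note that the Chen step is what allows the corollary to be stated for \emph{all} $(\sigma=2)$-split graphs rather than only those with even $\Delta$, since Theorem~\ref{thm:edge_class1} itself carries that parity assumption.
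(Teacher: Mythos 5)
Your proof is correct and follows essentially the same route as the paper, which obtains the corollary by combining the characterization of Theorem~\ref{thm:edge_class1} with the equivalence of Theorem~\ref{thm:eq_split} (and Observation~\ref{obs:class2} for the easy direction). Your explicit use of Chen's theorem to dispose of the odd-$\Delta$ case is a welcome extra precision that the paper's one-sentence justification leaves implicit.
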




Note that, for the recognition of $(\sigma=2)$-split graphs with even $\Delta(G)$ that are not neighborhood-overfull, we can, by Lemma~\ref{thrm:pendant}, exclude some vertices of maximum degree, since only universal vertices in $G[V\setminus{P}]$ are candidates to induce, with their neighborhood, an overfull subgraph. Furthermore, if all these candidates have pendant vertices, still by Lemma~\ref{thrm:pendant}, we can conclude that $G$ is not an overfull subgraph. Consequently, by Theorem~\ref{thm:edge_class1}, we can state the following sufficient condition.

\begin{corollary}
    Let $G=((X,Y),E)$ be a split graph $\sigma(G)=2$, $A=\{a_i|d(a_i)=\Delta(G)$ and $a_i$ is universal in $G[V\setminus P]\}$. If every $a_i$ has one or more pendant vertices, $G$ is Class~$1$.
\end{corollary}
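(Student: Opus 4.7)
The plan is to reduce this to Theorem~\ref{thm:edge_class1} via a case split on the parity of $\Delta(G)$. When $\Delta(G)$ is odd, the corollary follows immediately from the cited result that any split graph with odd maximum degree is Class~1, so no use of the hypothesis on $A$ is even needed. The substantive case is therefore $\Delta(G)$ even, where Theorem~\ref{thm:edge_class1} tells us that being Class~1 is equivalent to not being neighborhood-overfull. So I would argue the contrapositive: assuming $G$ is neighborhood-overfull, I derive that some $a_i \in A$ must fail to have a pendant neighbor, contradicting the hypothesis.

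Concretely, suppose $G$ is neighborhood-overfull. By Theorem~\ref{thm:overfull}, there exists $v \in X$ that is universal in $G[V \setminus P]$ with $\Delta(G[N[v]]) = \Delta(G)$ and $G[N[v]]$ overfull. In particular $d(v) = \Delta(G)$, so $v$ lies in the set $A$ defined in the corollary's statement. Now Lemma~\ref{thrm:pendant} applies to this $v$: since $\Delta(G)$ is even, $d(v) = \Delta(G)$, and $G[N[v]]$ is overfull, the lemma forces $v$ to have no pendant neighbor in $Y$. But the hypothesis of the corollary says every member of $A$ has at least one pendant neighbor, a direct contradiction.

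Therefore $G$ cannot be neighborhood-overfull, and Theorem~\ref{thm:edge_class1} concludes that $G$ is Class~1. The entire argument is essentially bookkeeping on top of the already-proved results, so I do not anticipate a genuine obstacle; the one subtlety worth being explicit about is verifying that the witness $v$ produced by Theorem~\ref{thm:overfull} belongs to $A$ as defined here (universality is in $G[V \setminus P]$, and $d(v) = \Delta(G)$ comes from $\Delta(G[N[v]]) = \Delta(G)$ together with $v$'s role as the central vertex of that closed neighborhood). Once that identification is made, the contradiction with Lemma~\ref{thrm:pendant} is immediate.
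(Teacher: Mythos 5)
Your argument is correct and follows essentially the same route as the paper: rule out neighborhood-overfullness via Lemma~\ref{thrm:pendant} (through Theorem~\ref{thm:overfull}) using the hypothesis that every candidate vertex in $A$ has a pendant neighbor, then invoke Theorem~\ref{thm:edge_class1} to conclude Class~1. Your explicit handling of the odd-$\Delta(G)$ case via Chen's theorem, and your verification that the witness $v$ from Theorem~\ref{thm:overfull} indeed lies in $A$, make precise what the paper only sketches in the paragraph preceding the corollary.
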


\section{Total-coloring of~\boldmath\texorpdfstring{($\sigma=2$)}{sigma=2}-split graphs}
In this section we deal with the {\sc total coloring problem} using the same approach we used for the {\sc edge coloring problem}. Next, we present two theorems that allow us to classify $(\sigma=2)$-split graphs with respect to the \textsc{total coloring problem}.

\begin{theorem}\emph{\cite{Chen}} 
    Let $G=((X,Y);E)$ be a split graph. If $G$ has even maximum degree $\Delta(G)$, then $G$ is Type~{1}.
    \label{thm:Chen}
\end{theorem}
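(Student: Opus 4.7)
The plan is to reduce the total coloring problem on $G$ to an edge coloring problem on an augmented split graph, and then invoke the cited theorem that split graphs with odd maximum degree are Class~$1$.

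First I would form $G^{+}$ from $G$ by adding a new vertex $w$ adjacent to every vertex of $G$. Then $G^{+}$ is still a split graph, with clique $X\cup\{w\}$ and independent set $Y$. One has $d_{G^{+}}(w)=n$ and $d_{G^{+}}(v)=d_G(v)+1$ for every $v\in V(G)$, so $\Delta(G^{+})=\max\{\Delta(G)+1,\,n\}$. By hypothesis $\Delta(G)$ is even, so $\Delta(G)+1$ is odd. In the favorable case $\Delta(G^{+})=\Delta(G)+1$, the earlier edge-coloring result for split graphs with odd maximum degree produces a proper edge coloring of $G^{+}$ using exactly $\Delta(G)+1$ colors.

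From such a coloring I would build a total coloring of $G$ by keeping each edge of $G$ with the color it received in $G^{+}$ and assigning to each vertex $v\in V(G)$ the color of the edge $vw$. Adjacent vertices of $G$ receive distinct colors because the edges $vw$ and $uw$ share the endpoint $w$ in $G^{+}$; and the color chosen for vertex $v$ differs from the colors of all edges of $G$ incident to $v$ because in $G^{+}$ those elements are all incident at $v$. Hence $G$ is Type~$1$ whenever $\Delta(G)+1\geq n$, and in particular whenever $G$ has a universal vertex.

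The main obstacle is the residual case $n>\Delta(G)+1$, in which the augmentation raises the maximum degree to $n$ and the reduction above no longer yields a $(\Delta(G)+1)$-coloring. The intended workaround is to first strip off the pendant vertices $P$, which by Observation~\ref{obs:pendant_vertex} do not affect the stretch index and do not raise $\Delta$, apply the augmentation argument to $H=G\setminus P$ whenever $H$ does contain a universal vertex (as is guaranteed in the $\sigma=2$ regime by Theorem~\ref{caracterizacao_split2adm}), and then reinsert each pendant edge at the end, colouring it via Observation~\ref{obs:pendant_edge} and assigning the pendant vertex any missing color. For split graphs outside the $\sigma=2$ regime this reduction may fail to produce a universal vertex, and one would instead have to start from an arbitrary $(\Delta(G)+1)$-edge coloring of $G$ and use a Hall-type selection to pick a pairwise distinct missing color at each clique vertex for its vertex color, colouring the independent set $Y$ greedily afterwards; verifying that such a selection always exists is where I expect the real work to lie.
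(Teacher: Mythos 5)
There is a genuine gap. First, note that Theorem~\ref{thm:Chen} is not proved in this paper at all: it is imported from~\cite{Chen}, where the proof is a substantial explicit construction of a $(\Delta+1)$-total coloring for an arbitrary split graph with even maximum degree. Your join trick (add a universal vertex $w$, edge-color $G^{+}=G\vee K_1$, and transfer the color of $vw$ to the vertex $v$) is sound as far as it goes, but since $d_{G^{+}}(w)=n$ we always have $\Delta(G^{+})=n$, so your ``favorable case'' $\Delta(G^{+})=\Delta(G)+1$ occurs exactly when $G$ already has a universal vertex. Thus the argument establishes Type~1 only for split graphs with a universal vertex and even $\Delta$, which is a small special case of the statement; the theorem is claimed for \emph{all} split graphs with even maximum degree, including $(\sigma=3)$-split graphs, where no universal vertex exists and none appears after deleting pendant vertices.

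Your proposed workarounds do not close this gap. Stripping the pendant set $P$ only guarantees a universal vertex in $G[V\setminus P]$ under the hypothesis $\sigma(G)=2$ (Theorem~\ref{caracterizacao_split2adm}), which is not part of the statement, and even then $\Delta(G[V\setminus P])$ may have the wrong parity or be smaller than $\Delta(G)$, so the odd-maximum-degree Class~1 result need not apply to the augmented graph. The fallback you describe for the general case --- take any $(\Delta+1)$-edge coloring, pick pairwise distinct missing colors at the clique vertices by a Hall-type argument, then color $Y$ greedily --- is precisely where the theorem's difficulty lies, and it does not work as sketched: clique vertices of degree $\Delta$ have a single missing color each and these may coincide, and a vertex $y\in Y$ can see up to $2d(y)\geq\Delta+1$ forbidden colors, so the greedy step can fail. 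Since you yourself defer ``the real work'' to this unverified step, the proposal does not constitute a proof of Theorem~\ref{thm:Chen}; it reproves only the universal-vertex case and otherwise restates the problem.
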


\begin{theorem}(Hilton's Condition)~\emph{\cite{Hilton}} 
    Let $G$ be a graph with an even number of vertices. If $G$ has a universal vertex, then $G$ is Type~{2} if and only if $|E(\overline{G})|+\alpha'(\overline{G})<\frac{|V(G)|}{2}$, where $\alpha'(\overline{G})$ denotes the size of the maximum independent sets of edges of $\overline{G}$.
    \label{thm:Hilton_condition}
\end{theorem}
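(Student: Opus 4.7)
The plan is to prove Hilton's biconditional by handling the two directions separately around a common canonical form for any total $n$-coloring of $G$. Since $u$ is universal and $n=|V(G)|$ is even, $\Delta(G)=n-1$, so $\chi''(G)\in\{n,n+1\}$; "Type~2" means no total $n$-coloring exists.

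Canonical setup. In any total $n$-coloring, $u$ receives a unique color $c_0$ and the $n-1$ edges at $u$ use the remaining colors bijectively; relabel so that the color of $uv$ is the label $v\in V\setminus\{u\}$. Each $v\neq u$ then receives a vertex color $\psi(v)\in(V\setminus\{u\})\setminus\{v\}$, and $\psi(v)=\psi(w)$ forces $vw\in E(\overline G)$. For each color $c\neq c_0$, the class $(S_c,M_c)$ contains the edge $uv_c$, where $c\mapsto v_c$ is a bijection on $V\setminus\{u\}$; the class of $c_0$ is $\{u\}$ together with a matching $M_{c_0}$ of $G-u$.

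Necessity (contrapositive). Assume a total $n$-coloring exists. A double count of unused (color, vertex) slots gives $\sum_{c}(n-|S_c|-2|M_c|)=2|E(\overline G)|$. The slack at $c_0$ equals $n-1-2|M_{c_0}|$ and is \emph{odd} because $n-1$ is odd; since the total is even, summing parities forces at least one other color to have $|S_c|$ odd (hence $\ge 1$), which produces a non-edge of $G$ usable in a matching of $\overline G$. Iterating this parity argument over all odd-slack classes, and combining with the non-edges forced inside the larger $S_c$'s, I would extract a matching $M^{\circ}\subseteq E(\overline G)$ together with $|E(\overline G)|-|M^{\circ}|$ additional non-edges whose total count is at least $n/2$. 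Since $|M^{\circ}|\le\alpha'(\overline G)$, this yields $|E(\overline G)|+\alpha'(\overline G)\ge n/2$.

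Sufficiency. Given $|E(\overline G)|+\alpha'(\overline G)\ge n/2$, let $M^{*}$ be a maximum matching in $\overline G$, and define $\psi(v)=w$, $\psi(w)=v$ for each $\{v,w\}\in M^{*}$; assign the remaining vertices using non-edges from $E(\overline G)\setminus M^{*}$ to merge color classes whenever possible. The decisive step is to extend $\psi$ to a proper edge coloring of $G-u$ with the palette $\{c_0\}\cup(V\setminus\{u\})$ respecting the forbidden labels $\{v,\psi(v)\}$ at every $v$; the Hilton inequality is exactly what guarantees Hall's condition in the bipartite auxiliary graph whose parts are $V\setminus\{u\}$ and the available color labels, after which a König-type matching decomposition produces the required edge coloring.

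The hardest step is precisely this last extension. $G-u$ has $n-1$ (odd) vertices and is not bipartite in general, so Vizing's and König's theorems do not apply directly; one must exploit the structure imparted by $M^{*}$ and by the non-edges of $E(\overline G)\setminus M^{*}$ to route forbidden labels away from overflow vertices, typically by a Vizing-fan-style recoloring or a reduction to a bipartite auxiliary. The even-$|V|$ hypothesis is essential throughout: if $n$ were odd, the $c_0$-slack would be even and the parity argument driving the necessity would collapse.
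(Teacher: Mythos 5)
First, note that the paper does not prove this statement at all: it is quoted verbatim from Hilton's paper (reference \cite{Hilton}) and used as a black box, so the only fair comparison is with Hilton's original argument, which devotes most of a paper to the constructive direction. Your proposal is a sketch with genuine gaps in both directions. In the counting direction (total $n$-coloring exists $\Rightarrow |E(\overline{G})|+\alpha'(\overline{G})\ge n/2$), the double count $\sum_c\bigl(n-|S_c|-2|M_c|\bigr)=2|E(\overline{G})|$ and the parity observation at $c_0$ are correct, but the step ``$|S_c|$ odd (hence $\ge 1$), which produces a non-edge of $G$ usable in a matching of $\overline{G}$'' is false as stated: a class with $|S_c|=1$ yields no non-edge. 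The matching in $\overline{G}$ has to come from vertices sharing a vertex color (each vertex color class is a clique of $\overline{G}$, so it contributes $\lfloor|S_c|/2\rfloor$ disjoint non-edges), and the missing inequality is $\sum_c \mathrm{def}(c)\ge\#\{c: |S_c|\ \text{odd}\}$, which follows from $\mathrm{def}(c)\equiv|S_c|\pmod 2$; combining these gives $|E(\overline{G})|+\alpha'(\overline{G})\ge \tfrac{1}{2}\sum_c\mathrm{def}(c)+\sum_c\lfloor|S_c|/2\rfloor\ge n/2$. Your stated tally (``a matching $M^{\circ}$ together with $|E(\overline{G})|-|M^{\circ}|$ additional non-edges whose total count is at least $n/2$'') amounts to claiming $|E(\overline{G})|\ge n/2$, which is stronger than the theorem and not implied; the phrase ``I would extract'' is where the proof stops being a proof.

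The sufficiency direction (inequality $\ge n/2$ $\Rightarrow$ a total $n$-coloring exists) is the genuinely hard half, and your proposal does not contain an argument for it. Choosing $\psi$ from a maximum matching of $\overline{G}$ is the right start, but the claim that the Hilton inequality ``is exactly what guarantees Hall's condition in the bipartite auxiliary graph, after which a K\"onig-type matching decomposition produces the required edge coloring'' does not work: a system of distinct representatives for (vertex, available colors) pairs does not produce a proper edge coloring of $G-u$, which is a nonbipartite graph on an odd number of vertices with prescribed forbidden colors at each vertex, and you concede yourself that Vizing and K\"onig do not apply directly and that ``one must exploit the structure \dots typically by a Vizing-fan-style recoloring or a reduction to a bipartite auxiliary.'' That decisive extension is precisely the content of Hilton's theorem; his proof builds the coloring explicitly from a near-one-factorization of $K_{2n}$, spending the saved color classes using the edges of $\overline{G}$ and the matching $\alpha'(\overline{G})$, with nontrivial case analysis. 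As it stands, your text establishes neither direction; the first can be repaired along the lines indicated above, but the second needs an actual construction rather than an appeal to Hall's condition.
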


Theorem~\ref{thm:total_type1} characterizes Type~2 $(\sigma=2)$-split graphs with odd maximum degree.

\begin{theorem}
    Let $G=((X,Y),E)$ be a $(\sigma=2)$-split graph with odd maximum degree and $H=G(V \setminus P)$. Then $G$ is Type~2 if, and only if, $\Delta(H) =\Delta(G)$ and $H$ satisfies Hilton's condition.
    \label{thm:total_type1}
\end{theorem}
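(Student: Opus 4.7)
The proof splits into two implications, and the pivot in both is that $H$ inherits a universal vertex $v\in X$ from Theorem~\ref{caracterizacao_split2adm} (applied to the pendant-free split graph $H$), which forces $\Delta(H)=d_H(v)=|V(H)|-1$. Consequently $|V(H)|$ and $\Delta(H)$ always have opposite parity, so whenever $\Delta(H)$ is odd $|V(H)|$ is even and Hilton's condition is applicable.

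For the ($\Leftarrow$) direction, assume $\Delta(H)=\Delta(G)$ and $H$ satisfies Hilton's condition. Since $\Delta(G)$ is odd, $|V(H)|$ is even; combined with the universal vertex in $H$, Theorem~\ref{thm:Hilton_condition} yields $\chi''(H)=\Delta(H)+2=\Delta(G)+2$. Because $H$ is an induced subgraph of $G$, every total coloring of $G$ restricts to a proper total coloring of $H$ (adjacencies and incidences inside $H$ are inherited from $G$), hence $\chi''(G)\geq\chi''(H)=\Delta(G)+2$ and $G$ is Type~2.

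For the ($\Rightarrow$) direction I would argue the contrapositive in two cases. Case~A: $\Delta(H)<\Delta(G)$. If $\Delta(H)$ is even, Theorem~\ref{thm:Chen} gives $\chi''(H)=\Delta(H)+1\leq\Delta(G)$; if $\Delta(H)$ is odd, then $|V(H)|$ is even, the universal vertex is present, and Theorem~\ref{thm:Hilton_condition} bounds $\chi''(H)$ by $\Delta(H)+2\leq\Delta(G)+1$. Case~B: $\Delta(H)=\Delta(G)$ but $H$ fails Hilton's condition; since $|V(H)|$ is even and $H$ has a universal vertex, Theorem~\ref{thm:Hilton_condition} declares $H$ Type~1, giving $\chi''(H)=\Delta(H)+1=\Delta(G)+1$. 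In every branch, a total coloring of $H$ with at most $\Delta(G)+1$ colors exists.

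It then remains to extend such a coloring to the pendant vertices of $G$, and this is where I expect the only real bookkeeping. Processing pendants greedily, for a pendant $p$ attached to $x\in X$ the new edge $xp$ must avoid the $d_H(x)+1$ colors already incident to $x$ in $H$ together with the colors used on previously processed pendant edges at $x$; since $d_G(x)\leq\Delta(G)$, at least one of the $\Delta(G)+1$ colors is always free. The vertex $p$ then receives any color distinct from that of $xp$ and that of $x$, which is possible because $\Delta(G)+1\geq 3$ (the degenerate case $\Delta(G)=1$ is ruled out by $\sigma(G)=2$). This yields a total $(\Delta(G)+1)$-coloring of $G$, so $G$ is Type~1, closing the contrapositive. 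The main obstacle, beyond this routine extension, is spotting the parity coupling $|V(H)|=\Delta(H)+1$ forced by the universal vertex in $H$; once it is in hand, the rest is a clean assembly of Chen's and Hilton's theorems.
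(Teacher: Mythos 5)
Your proof is correct and follows essentially the same route as the paper's: sufficiency by noting that a Type~2 subgraph $H$ with $\Delta(H)=\Delta(G)$ forces $G$ to be Type~2, and necessity by contrapositive, total-coloring $H$ via Chen's theorem or Hilton's theorem (using the universal vertex of $H$) and then greedily extending to the pendant edges and vertices within $\Delta(G)+1$ colors. Your explicit parity observation $|V(H)|=\Delta(H)+1$ and the uniform greedy color count are a slightly cleaner presentation of the paper's case-by-case ``missing colors / new colors'' bookkeeping, but the underlying argument is the same.
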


\begin{proof}
    If $H$ satisfies Hilton's condition and $\Delta(H)=\Delta(G)$, then $H$ is Type~2 and thus $G$ is Type~2. Next, suppose by contrapositive that $\Delta(H) \neq \Delta(G)$ or $H$ does not satisfy Hilton's condition. Firstly, suppose $\Delta(H) \neq \Delta(G)$ and $H$ satisfies Hilton's condition. Therefore, $H$ is Type~2, and since $\Delta(H)+2$ colors are used to color the elements of $H$, for each vertex $x \in X$, there is at least one missing color in $L(x)$. In order to finish the total coloring of $G$, it remains to add all pendant vertices to $H$ and color each added pendant vertex and edge. Let $w \in X$ be a vertex such that $d(w)=\Delta(G)$ and suppose $d_G(w) - d_H(w) = i$. Since $|L(w)|\geq 1$, it is assured that we use at most $i-1$ new colors to color the pendant edges. To finish the total-coloring of $G$, we assign to each pendant vertex $v$ any color $c \in C(x)$, where $N_G(v)=\{x\}$. Therefore, $G$ is Type~1. Now, suppose $H$ does not satisfy Hilton's condition. If $\Delta(H)=\Delta(G)$, then $\chi''(H)=\Delta(H)+1$ and for each vertex $u$ such that $d(u)<\Delta(G)$, $|L(u)|\geq 1$. By Observation~\ref{obs:pendant_edge}, the missing colors in $L(u)$ are sufficient to color each added pendant edge of vertex $u$. Pendant vertices are colored the same way as in the previous case. Thus, $G$ is Type~1. On the other hand, if $\Delta(H)\neq\Delta(G)$, new colors are necessary to color all added pendant edges. Let $w \in X$ be a vertex such that $d(w)=\Delta(G)$ and suppose $d_G(w) - d_H(w) = i$. Since $|L(w)|\geq 1$, it is assured that we use at most $i$ new colors to color the pendant edges. Again, the coloring of pendant vertices follows as the previous cases. Therefore, $G$ is Type~1.
\end{proof}

\vspace{0.5em}   
Next we present an algorithm that performs the total coloring of Type~$1$ $(\sigma=2)$-split graphs.\\


\begin{algorithm}[H]
    \caption{Total coloring of $(\sigma=2$)-split graphs \label{alg:total_coloring}}
    \SetAlgoLined
    \KwData{A $(\sigma=2)$-split graph $G=((X,Y),E)$ and $H=G[V\setminus{P}]$, where $p \in P$ iff $d(p)=1$.}
    \KwResult{The type of $G$ and a $(\Delta+1)$-total coloring, if $G$ is Type~1.}
    \If{$\Delta(H)$ is even }{
        \If{$\Delta(H)=\Delta(G)$}{
            \Return{$G$ is Type~1}\;
        }
        \Else{
            \Return{$G$ is Type~1}\;
            Obtain $H_c$ using Chen's algorithm\;
            total-color($G,H_c$)\;
        \Return{a $(\Delta+1)$-total coloring of $G$}\;
        }
    }
    \Else{
        \If{$H$ satisfies the Hilton's Condition}{
              \If{$\Delta(H)=\Delta(G)$}{
              \Return{$G$ is Type~2}\;
              }
              \Else{
                \Return{$G$ is Type~1}\;
                Obtain $H_c$ using~\cite{Chen}\;
                total-color($G,H_c$)\;
                \Return{a $(\Delta+1)$-total coloring of $G$}\;
              }
        }
        \Else{
            \Return{$G$ is Type~1}\;
            Obtain $H_c$ using Hilton's algorithm\;
            total-color($G,H_c$)\;
            \Return{a $(\Delta+1)$-total coloring of $G$}\;
        }
    }
\end{algorithm}
\begin{procedure}[H]
    \caption{total-color($G,H$) \label{proc:total_coloring}}
    \SetAlgoLined
    \KwData{A Type~1 $(\sigma=2)$-split graph $G=((X,Y),E)$ and an edge-colored subgraph $H=G[V\setminus{P}]$, where $p \in P$ iff $d(p)=1$.}
    \ForAll{$v \in V(H)$}{
        $L(v)=\left(\mathcal{C}(H)\setminus C(v)\right)=\{l_1,l_2,...,l_k\}$
    }
    \ForAll{$v \in P$}{
        Add $vx$ to $H$, s.t. $N_G(v)=\{x\}$\;
        Color $v$ with any $c \in C(x)$\;
        \If{$L(x) \neq \emptyset$}{
            $C(v)=\{l_i\}$, $i \in \{1,..,k\}$
            $L(x)=L(x)\setminus \{l_i\}$
        }
        \Else{
            $C(v)=\{j\}$, s.t. $j$ is a new color\;
            \ForAll{$u \in V\setminus \{v,x\}$}{
                $L(u)=L(u)\cup \{j\}$\;
            }
        }
    }
\end{procedure}

\textcolor{white}{line}\\    
Figure~\ref{fig:passoapasso2} shows an application of Algorithm~\ref{alg:total_coloring} in order to perform the total-coloring of Type~$1$ $(\sigma=2)$-split graphs:

\begin{figure}[H]
  \centering
  \subfloat[]{\includegraphics[width=.20\textwidth]{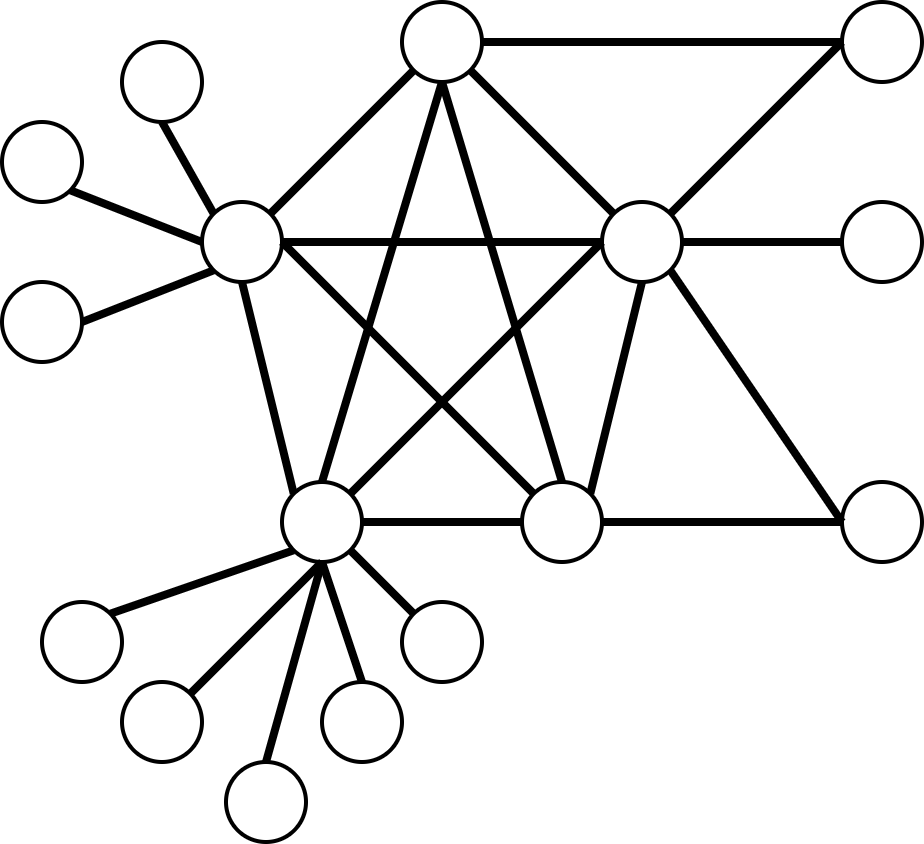}}\hspace{1.5em}%
  \subfloat[]{\includegraphics[width=.18\textwidth]{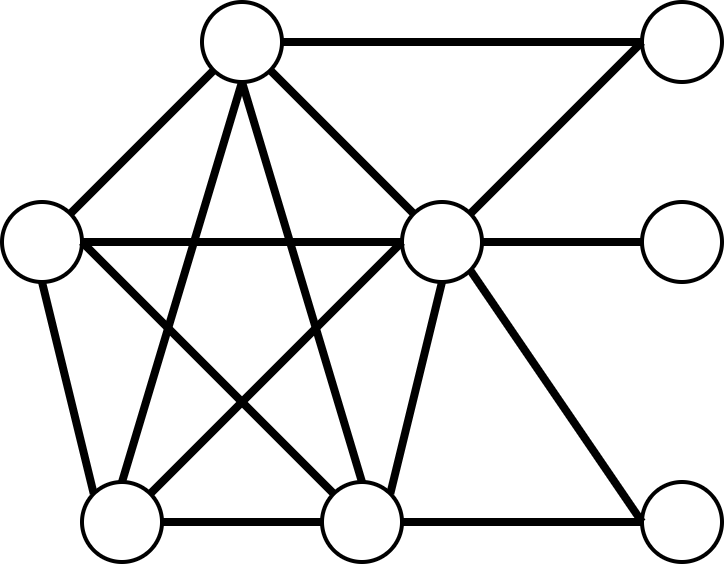}}\hspace{1.5em}%
  \subfloat[]{\includegraphics[width=.18\textwidth]{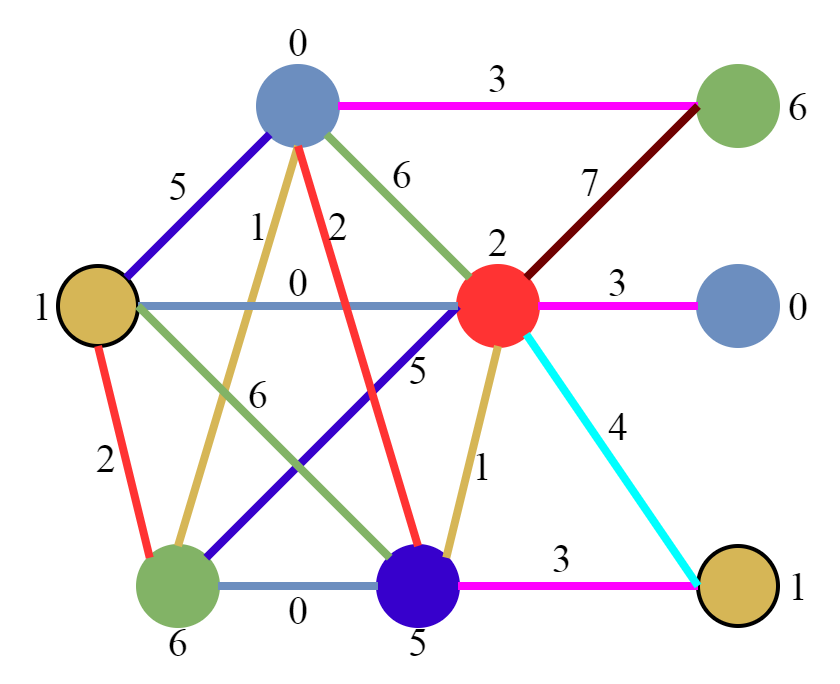}}\\
  \subfloat[]{\includegraphics[width=.20\textwidth]{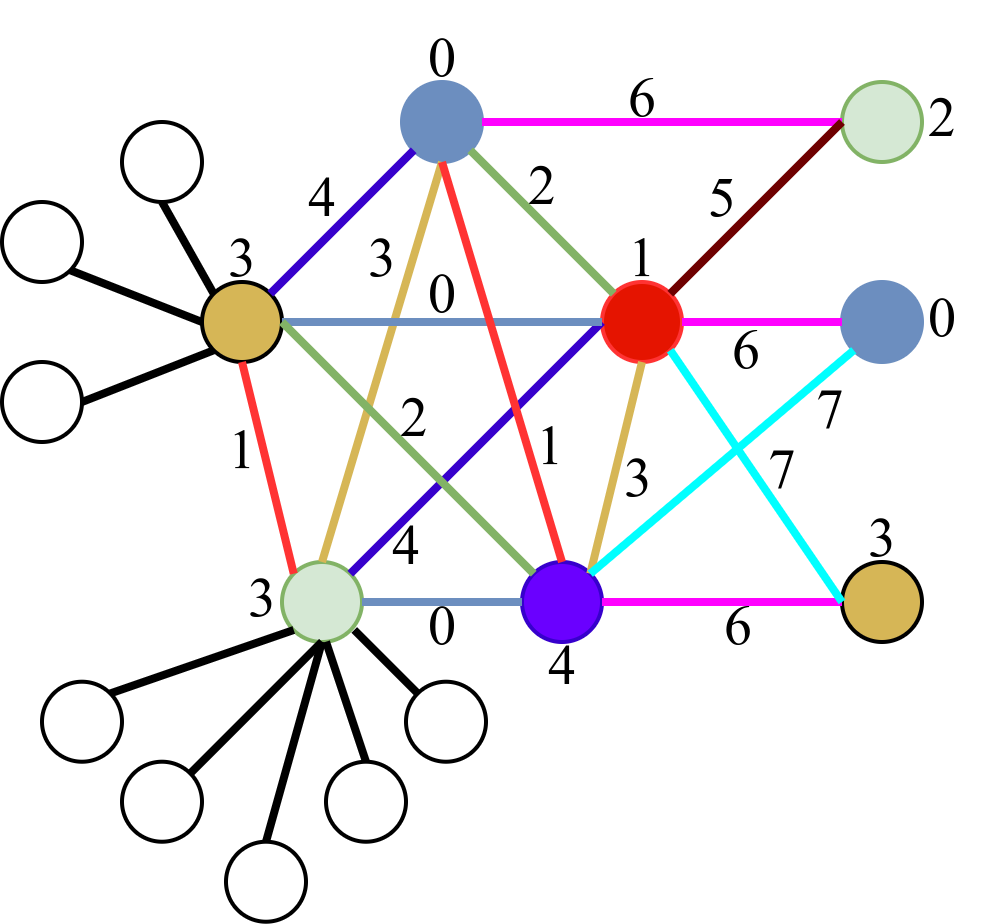}}\hspace{1.5em}%
  \subfloat[]{\includegraphics[width=.20\textwidth]{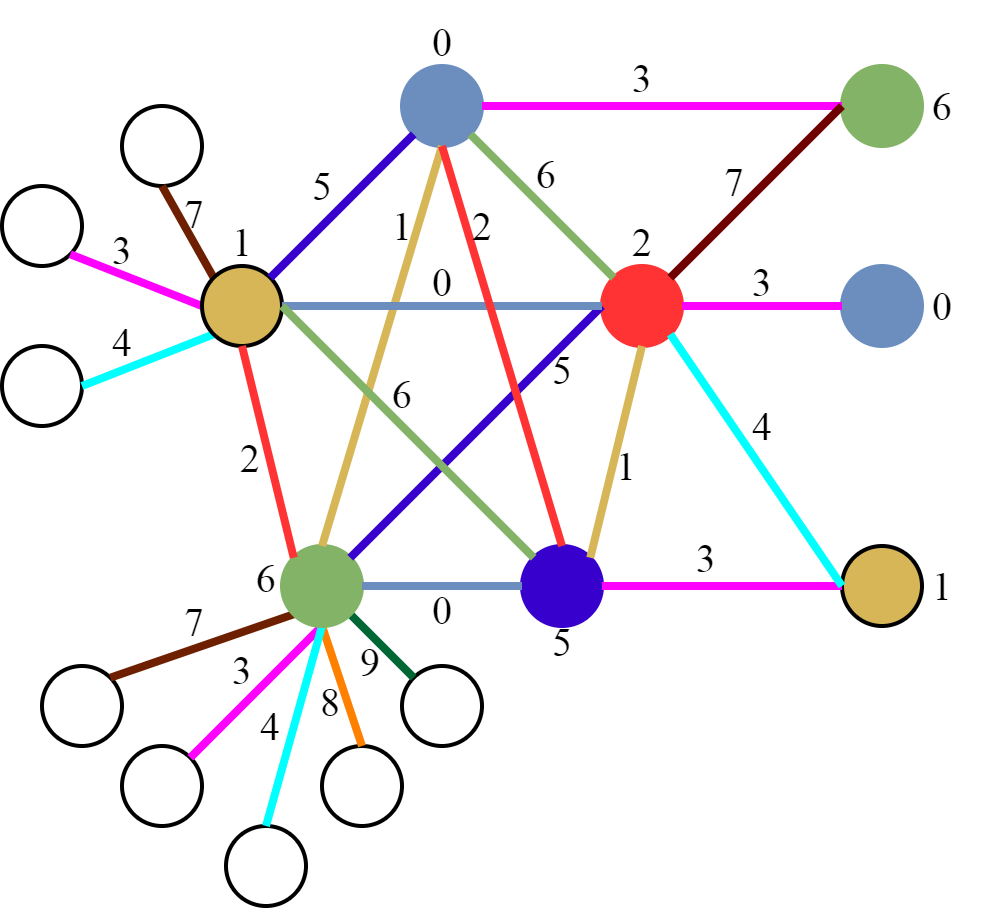}}\hspace{1.5em}%
  \subfloat[]{\includegraphics[width=.20\textwidth]{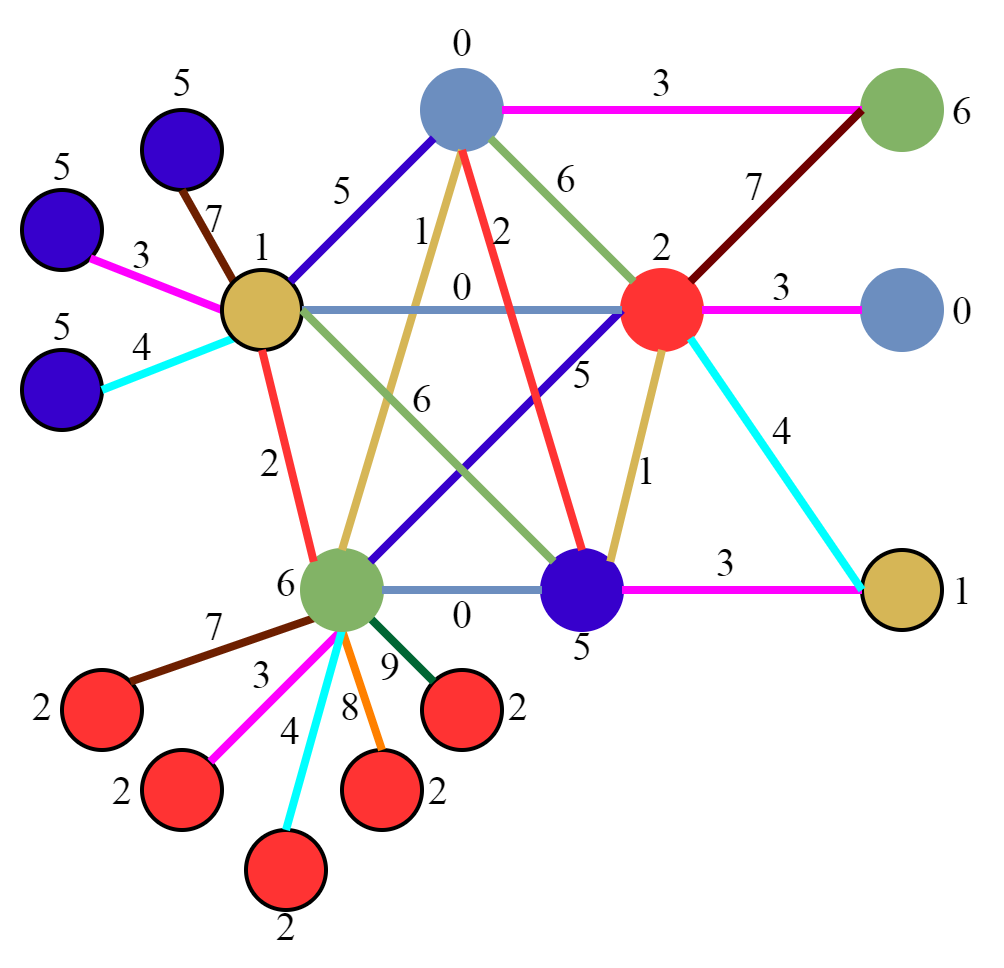}}\hspace{1.5em}%
  \caption{(a) Split graph $G$ such that $\sigma(G)=2$. (b) Subgraph $H=G[V \setminus P]$. (c) Total-coloring of $H$. (d) Pendant edges and pendant vertices missing coloring. (e) Edge-coloring of pendant edges. (f) Total-coloring of $G$.}
  \label{fig:passoapasso2}
\end{figure}

\begin{theorem}
 Algorithm~\ref{alg:total_coloring} is correct and runs in polynomial time.
    \label{thm:correction_total}
\end{theorem}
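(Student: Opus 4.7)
The plan is to split the argument into two parts: the correctness of the classification branches and the correctness of the total coloring produced when $G$ is Type~1, followed by a short complexity analysis.

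For classification, I would proceed branch by branch, matching each branch of Algorithm~\ref{alg:total_coloring} to a previously established result. If $\Delta(H)$ is even and $\Delta(H)=\Delta(G)$, then $\Delta(G)$ is even and Theorem~\ref{thm:Chen} directly gives that $G$ is Type~1. If $\Delta(H)$ is even but $\Delta(H)<\Delta(G)$, then $\Delta(G)$ must be odd, so I cannot appeal to Theorem~\ref{thm:Chen} on $G$; instead I would invoke Theorem~\ref{thm:total_type1}, noting that when $\Delta(H)$ is even we have $\Delta(H)\neq\Delta(G)$ (an odd integer), so the hypotheses for Type~2 fail and $G$ is Type~1. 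For the odd-$\Delta(H)$ branches, Theorem~\ref{thm:total_type1} applies verbatim: $G$ is Type~2 precisely when $\Delta(H)=\Delta(G)$ and $H$ satisfies Hilton's condition, and Type~1 otherwise.

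For the coloring produced in each Type~1 branch, I would first color $H$ using the algorithm appropriate to its type (Chen's algorithm when $\Delta(H)$ is even, the algorithm underlying Theorem~\ref{thm:Hilton_condition} when $H$ satisfies Hilton's condition, and Chen's algorithm again when $H$ does not satisfy it and $\Delta(H)$ is odd, noting $\Delta(H)$ need not be even there — here the proof has to be slightly more careful, so I would in fact argue that a $(\Delta(H)+2)$-total coloring of $H$ always exists and suffices). Then I invoke Procedure~\ref{proc:total_coloring}. For each pendant vertex $v$ with neighbor $x$, the procedure needs to assign a color to the edge $vx$ and a color to $v$. For the edge, I would mimic the argument in Theorem~\ref{thm:edge_class1}: the list $L(x)$ of missing colors at $x$ after the coloring of $H$ has size at least $\Delta(G)-d_H(x)$, which matches the number of pendant edges incident to $x$, except that at a vertex of maximum degree $\Delta(G)$ with $d_H(x)<\Delta(G)$ we may need at most one new color, and Theorem~\ref{thm:total_type1}'s proof guarantees this stays within $\Delta(G)+2$ total colors. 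For the pendant vertex $v$ itself, assigning any color $c\in C(x)$ other than the one just used on $vx$ is proper: $c$ differs from $v$'s only incident edge color by choice, and $c$ is not the color of $x$ because the coloring of $H$ is a total coloring (so $x$'s vertex color lies outside $C(x)$). Thus all conflicts are avoided and at most $\Delta(G)+2$ colors are used, making the resulting coloring a valid $(\Delta+2)$-total coloring. Since the algorithm only enters these branches when $G$ is Type~1 (where the declared bound is $\Delta+1$), the subtle point to verify carefully is that the at most ``$i-1$'' or ``$i$'' new colors introduced, as counted in the proof of Theorem~\ref{thm:total_type1}, do not push the total above $\Delta(G)+1$; I would handle this by a direct counting argument matching that theorem's proof.

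For complexity, the coloring of $H$ is done by Chen's or Hilton's algorithm, both polynomial in $|V(H)|$. Procedure~\ref{proc:total_coloring} initializes the lists $L(v)$ in $O(n\Delta)$ time, then iterates over pendant vertices; each iteration performs constant work on $v$ and $x$ and possibly an $O(n)$ update propagating a new color into every other list, giving $O(n^2)$ overall. Summing, the algorithm runs in polynomial time, dominated by the call to Chen's or Hilton's subroutine. The main obstacle I anticipate is the careful bookkeeping of new colors so that the final palette size stays at $\Delta(G)+1$ in every Type~1 branch; this reduces to re-running the case distinction of Theorem~\ref{thm:total_type1} and verifying that the procedure never introduces more colors than that proof allows.
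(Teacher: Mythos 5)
Your proof is correct and follows essentially the same route as the paper, whose own proof simply cites Theorems~\ref{thm:Chen}, \ref{thm:Hilton_condition} and~\ref{thm:total_type1} for the correctness of the classification and of the extension step, and notes that Procedure~\ref{proc:total_coloring} runs in $\mathcal{O}(n^2)$; your branch-by-branch argument is just a more explicit unpacking of that. One minor slip: in the branch where $\Delta(H)$ is even and $\Delta(H)<\Delta(G)$, $\Delta(G)$ need not be odd (a maximum-degree vertex can lose several pendant neighbours when passing to $H$), but this is harmless because when $\Delta(G)$ is even Theorem~\ref{thm:Chen} applies to $G$ directly, and the color-counting you outline for the pendant-edge extension does not depend on that parity claim.
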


\begin{proof}
    The correctness of Algorithm~\ref{alg:total_coloring} follows from Theorems~\ref{thm:Chen},~\ref{thm:Hilton_condition} and~\ref{thm:total_type1}. Note that the time complexity of Algorithm~\ref{alg:total_coloring} depends on the complexity of the algorithm used to color the subgraph $H$. Moreover, Procedure~\ref{proc:total_coloring} runs in $\mathcal{O}(n^2)$.
\end{proof}

\section{Concluding remarks and further work}\label{sec:conc}

For several years, the coloring problem and its variations for split graphs have been studied mainly considering some of its subclasses. In this work we considered split graphs in the context of the \textsc{$t$-admissibility problem}, which gave us a new perspective for dealing with the \textsc{edge coloring problem} and the \textsc{total coloring problem}. We characterized Class~2 and Type~2 $(\sigma=2)$-split graphs and provided polynomial-time algorithms to color Class~1 and Type~1 $(\sigma=2)$-split graphs. Next step consists in finishing the study of $(\sigma=3)$-split graphs, in this way, obtaining a fully classify split graphs considering edge and total coloring problems.

\bibliographystyle{abbrv}
\bibliography{AMC23}

\begin{thebibliography}{10}

\bibitem{Sheila}
S.~Almeida.
\newblock {\em Coloração de Arestas em Grafos Split}.
\newblock PhD thesis, Universidade Estadual de Campinas, Instituto de Computação, 03 2012.

\bibitem{TCCBehzad}
M.~Behzad.
\newblock {\em Graphs and their chromatic numbers}.
\newblock PhD thesis, Michigan State University, 1965.

\bibitem{TotalBehzad}
M.~Behzad, G.~Chartrand, and J.~Cooper.
\newblock The colour numbers of complete graphs.
\newblock {\em J. London Math. Soc.}, 42, 01 1967.

\bibitem{Bondy}
J.~A. Bondy and U.~S.~R. Murty.
\newblock {\em Graph Theory with Applications}.
\newblock Elsevier, New York, 1976.

\bibitem{doi:10.1137/1.9780898719796}
A.~Brandstädt, V.~B. Le, and J.~P. Spinrad.
\newblock {\em Graph Classes: A Survey}.
\newblock Society for Industrial and Applied Mathematics, 1999.

\bibitem{brooks_1941}
R.~L. Brooks.
\newblock On colouring the nodes of a network.
\newblock {\em Mathematical Proceedings of the Cambridge Philosophical Society}, 37(2):194–197, 1941.

\bibitem{Cail}
L.~Cai and D.~G. Corneil.
\newblock Tree spanners.
\newblock {\em SIAM J. Discrete Math.}, 8:359--387, 08 1995.

\bibitem{CAMPOS20122690}
C.~Campos, C.~{de Figueiredo}, R.~Machado, and C.~{de Mello}.
\newblock The total chromatic number of split-indifference graphs.
\newblock {\em Discrete Mathematics}, 312(17):2690--2693, 2012.
\newblock Proceedings of the 8th French Combinatorial Conference.

\bibitem{Chen}
B.-L. Chen, H.-L. Fu, and M.-T. Ko.
\newblock Total chromatic number and chromatic index of split graphs.
\newblock {\em JCMCC. The Journal of Combinatorial Mathematics and Combinatorial Computing}, 17, 01 1995.

\bibitem{ConjecturaHilton}
A.~Chetwynd and A.~Hilton.
\newblock Star multigraphs with three vertices of maximum degree.
\newblock {\em Mathematical Proceedings of the Cambridge Philosophical Society}, 100:303 -- 317, 09 1986.

\bibitem{Chew}
L.~P. Chew.
\newblock There are planar graphs almost as good as the complete graph.
\newblock {\em Journal of Computer and System Sciences}, 39:205--219, 01 1986.

\bibitem{Couto}
F.~Couto and L.~Cunha.
\newblock Hardness and efficiency on minimizing maximum distances in spanning trees.
\newblock {\em Theoretical Computer Science}, 838, 06 2020.

\bibitem{Figueiredo1995LocalCF}
C.~M.~H. de~Figueiredo, J.~Meidanis, and C.~P. de~Mello.
\newblock Local conditions for edge-coloring.
\newblock In {\em Journal of Combinatorial Mathematics and Combinatorial Computing}, volume~32, pages 79--92, 2000.

\bibitem{IndiferencaTCC}
C.~M.~H. de~Figueiredo, J.~a. Meidanis, and C.~P. de~Mello.
\newblock Total-chromatic number and chromatic index of dually chordal graphs.
\newblock {\em Inf. Process. Lett.}, 70(3):147–152, may 1999.

\bibitem{Cruz}
J.~B. de~Sousa~Cruz, C.~N. da~Silva, and S.~M. de~Almeida.
\newblock The overfull conjecture on split-comparability graphs.
\newblock {\em arXiv: Combinatorics}, 2017.

\bibitem{Gonzaga}
L.~Gonzaga.
\newblock Coloração de arestas em grafos split-comparabilidade e split-intervalos.
\newblock Master's thesis, Universidade Estadual de Campinas, Instituto de Computação, 2021.

\bibitem{Hilton}
A.~Hilton.
\newblock A total chromatic number analogue of plantholt's theorem.
\newblock {\em Discrete Mathematics}, 79:169–175, 01 1990.

\bibitem{Holyer1981TheNO}
I.~Holyer.
\newblock The np-completeness of edge-coloring.
\newblock {\em SIAM J. Comput.}, 10:718--720, 1981.

\bibitem{zbMATH02614481}
D.~{K\"onig}.
\newblock {Graphok \'es alkalmaz\'asuk a determin\'ansok \'es a halmazok elm\'elet\'ere. (\"Uber Graphen und ihre Anwendung auf Determinantentheorie und Mengenlehre.).}
\newblock {\em {Mat. Term\'eszett. \'Ertes.}}, 34:104--119, 1916.

\bibitem{CARMENORTIZ1998209}
C.~Ortiz, N.~Maculan, and J.~L. Szwarcfiter.
\newblock Characterizing and edge-colouring split-indifference graphs.
\newblock {\em Discrete Applied Mathematics}, 82(1):209--217, 1998.

\bibitem{ComparabilityOrtiz}
C.~Ortiz and M.~Villanueva.
\newblock On split-comparability graphs.
\newblock {\em In Proc. II ALIO-EURO Workshop on Pratical Combinatorial Optmization}, pages 91--105, 1996.

\bibitem{Panda}
B.~Panda and A.~Das.
\newblock Tree 3-spanners in 2-sep chordal graphs: Characterization and algorithms.
\newblock {\em Discrete Applied Mathematics}, 158:1913--1935, 10 2010.

\bibitem{Peleg}
D.~Peleg and J.~Ullman.
\newblock An optimal synchronizer for the hypercube.
\newblock {\em In Proceedings of the 6th ACM Symposium on Principles of Distributed Computing}, 18:77--85, 01 1987.

\bibitem{Plantholt}
M.~Plantholt.
\newblock The chromatic index of graphs with a spanning star.
\newblock {\em J. Graph Theory}, 5:45--53, 01 1981.

\bibitem{SANCHEZARROYO1989315}
A.~Sánchez-Arroyo.
\newblock Determining the total colouring number is np-hard.
\newblock {\em Discrete Mathematics}, 78(3):315--319, 1989.

\bibitem{Vizing}
V.~Vizing.
\newblock On an estimate of the chromatic class of a $p$-graph (in russian).
\newblock {\em (Russian) Diskret Analiz}, 3, 01 1964.

\bibitem{TotalVizing}
V.~Vizing.
\newblock Some unsolved problems in graph theory (russian).
\newblock {\em Russian Mathematical Surveys - RUSS MATH SURVEY-ENGL TR}, 23:125--141, 12 1968.

\bibitem{Wilson}
R.~Wilson and J.~Watkins.
\newblock Graphs: An introductory approach.
\newblock {\em The Mathematical Gazette}, 75, 03 1991.

\end{thebibliography}

\end{document}